\newtheorem{theorem}{Theorem}[section]
\newtheorem{lemma}[theorem]{Lemma}
\newtheorem{prop}[theorem]{Proposition}
\newtheorem{cor}[theorem]{Corollary}
\theoremstyle{definition}
\newtheorem{example}[theorem]{Example}
\title{Exploring the infinitesimal rigidity of planar configurations of points and rods}
\author{Signe Lundqvist, Klara Stokes and Lars-Daniel \"Ohman\\Ume{\aa} University, Sweden}
\begin{document}
\maketitle

\begin{center}
\textbf{Abstract}
\end{center}

This article is concerned with the rigidity properties of geometric realizations of incidence geometries of rank two as points and lines in the Euclidean plane; we care about the distance being preserved among collinear points. 
We discuss the rigidity properties of geometric realizations of incidence geometries in relation to the rigidity of geometric realizations of other well-known structures, such as graphs and hypergraphs.

The $2$-plane matroid is also discussed.

Further, we extend a result of Whiteley to determine necessary conditions for an incidence geometry of points and lines with exactly three points on each line, or 3-uniform hypergraphs, to have a minimally rigid realization as points and lines in the plane. We also give examples to show that these conditions are not sufficient.

Finally, we examine the rigidity properties of $v_k$-configurations. We provide several examples of rigid $v_3$-configurations, and families of flexible geometric $v_3$-configurations. The exposition of the material is supported by many figures.

\section{Introduction}

This article is concerned with realizations of incidence geometries that consist of points and lines in the Euclidean plane. We will view the lines as rigid bodies, so that the pairwise distance between points on a line is preserved. If there are two points on each line, the problem reduces to the well studied rigidity theory of graphs in the plane. We think of such realizations as consisting of rods (the rigid lines).

A line with $k$ points determines $\binom{k}{2}$ distance constraints, as the distance between any pair of points on the line needs to be preserved. One possible model for a rigid body with $k$ points is a geometric realization of a rigid graph on at least $k$ vertices. Replacing each body with such a rigid graph gives a description of a body and joint framework  in terms of a geometric realization of a graph. If we consider a realization where the $k$ vertex points on each body are collinear, we get a graph model of a rod configuration. However, in this model  the theory of infinitesimal rigidity does not generalize well. Consider for example a body on three vertices; it will be represented by a triangle graph that is geometrically realized on a line. We would like a model in which a body with three vertices is infinitesimally rigid, but the triangle graph in this special position has a non-trivial infinitesimal motion.
This infinitesimal motion does not come from a continuous motion of the triangle graph, but is a mere artifact of the model.

Whiteley made early important contributions to the understanding and the combinatorial characterization of the infinitesimal rigidity of rod configurations \cite{Whiteley89,Whiteley1996}.
He classified the minimally infinitesimally rigid rod configurations, using a notion of minimality that says that a configuration in which three points are covered in pairs by two rods may be smaller than if the two rods are replaced with one rod covering all three points.

In this article we will instead say that a rod configuration is minimally rigid if no line can be removed without the rod configuration becoming flexible. This is a natural generalization of the notion of minimal rigidity for graphs; a (geometric realization of a) graph is minimally rigid if no edge can be removed without the result being flexible.

Further related research has given combinatorial characterizations of the rigidity of body and hinge frameworks with all joints incident to a body lying in a common hyperplane, in all dimensions; these results are known as the {\em molecular conjecture}.
These results are however only valid when the number of bodies meeting at each point is two.

In the classical litterature, the study of configurations of points and lines has been mostly concerned with those configurations that have the same number $k$ of points on every line, and the same number $r$ of lines through every point. Such configurations are called $(v_r,b_k)$-configurations and when $r=k$ they are called $v_k$-configurations.
If either of $r$ or $k$ equals two, then either the configuration or its dual is essentially a graph. Therefore the literature on configurations is mostly concerned with the case when $r,k\geq 3$, a case which the available results on minimal rigidity of rod configurations do not cover. 

This article was written with the aim of shedding some light on this problem, motivating the reader to explore the topic of configurations within the scope of rigidity theory. Configurations of points and lines are important in geometry. Historically, they have fascinated many now famous mathematicians and they keep fascinating people also today.

\section{Background}
\label{Background}

\subsection{Graphs, configurations and incidence geometries}
\label{sec:structures}

An incidence structure is a quadruple $(V,T,t,I)$, where $V$ is a set of varieties, $T$ is a set of types, $t:V\rightarrow T$ is an exhaustive function assigning a type to each element of $V$, and $I$ is an incidence relation on $V$, such that no two elements of the same type are related. The rank of the incidence geometry is its number of types. 
The incidence relation on $V$ defines a multipartite graph with vertex set $V$ and parts corresponding to the types, called the incidence graph of the incidence structure. 

A flag of an incidence structure is a clique of the incidence graph. By
definition, all vertices in a flag have distinct types. An incidence geometry is an incidence structure with the property that every maximal flag contains an element of each type.
If the incidence structure has rank two, then clearly the incidence structure is a geometry if and only if its incidence graph has no isolated vertices. In particular, all connected incidence structures of rank two are incidence geometries.

The real Euclidean space $\mathbb{E}^n(\mathbb{R})$  defines an incidence geometry $\mathcal{E}^n$ of rank $n$, by taking as varieties the linear varieties of $\mathbb{E}^n(\mathbb{R})$ (points, lines, planes, \dots), and defining two varieties to be incident if one is included in the other.
A linear realization $\rho$ of dimension $m$ of an incidence geometry $\Gamma$ of rank $n$ is a function $\rho:\Gamma\rightarrow \mathcal{E}^m$ that preserves incidence. In this article, we will only be concerned with linear realizations of dimension two.

A Euclidean geometric configuration of points and lines is an incidence geometry of rank two, together with a linear realization in terms of points and lines in  Euclidean space. 
If the linear realization is injective, then the incidence geometry must have the property that every pair of elements of one type is simultaneously incident with at most one element of the other type. This property has also been called linearity of the incidence geometry \cite{Izquierdo-Stokes}, since it captures the abstract notion of the incidences of a line arrangement. 

Historically, the literature has mostly been concerned with geometric configurations of points and lines in which all lines are incident with the same number $k$ of points and all points are incident with the same number $r$ of lines. Such configurations are called $(v_r,b_k)$-configurations, where $v$ is the number of points and $b$ is the number of lines. If $r=k$, then also $v=b$, and the configuration is called balanced, in which case the name $v_k$-configuration is used.  Many important examples of configurations of points and lines are balanced, such as the $v_3$-configurations of Pappus and Desargues. 
Incidence geometries with the same abstract properties as those giving injective linear realizations that are geometric $(v_r,b_k)$-configurations or $v_k$-configurations are in the literature also called combinatorial configurations \cite{grunbaum, pisanski}.

From this point on, we will restrict our attention only to rank two incidence geometries.  We will use the notation $(P,L,I)$ to denote an incidence geometry of rank two, where $P$ and $L$ are the two sets of varieties of distinct type, and $I$ is the incidence relation. 

There is a direct correspondence between incidence geometries of rank two and hypergraphs. If all elements of one type (the `points') are incident to exactly two elements of the other type (the `lines'), then the incidence geometry is an ordinary graph. The combinatorial $(v_r,b_2)$-configurations are the $r$-regular graphs. The above definitions do not
exclude the possibility that two lines are incident with the exact same
set of points, so the corresponding graphs might be multigraphs (with
repeated edges, that is), but we will only be considering the case where
the graphs are simple.

Because any two points define a line in Euclidean space, any assignment of points to the vertices of a graph give rise to a linear realization of the graph. If the incidence geometry is not a graph, an injective linear realization can be difficult to find, and may not even exist. For example, there is no injective linear realization of the unique $7_3$-configuration, also known as the Fano plane. An incidence geometry always has trivial linear realizations, in which all points are mapped to the same point and all points are mapped to the same line. In the trivial linear realizations, there will however be additional incidences. 

The topic of this article is motions of linear realizations of rank two incidence geometries. We will also consider other geometric realizations of rank two incidence geometries: body and joint realizations and string configurations.

A body and joint realization, or a body and joint framework, of an incidence geometry of rank two $S=(P,L,I)$ is an assignment of points in Euclidean space to the elements of $P$. The elements of $L$ are then thought of as any rigid body that contains the points that are incident to it, and the elements of $P$ are thought of as joints, around which the bodies can rotate freely.

The incidence geometry may be realized in other ways as well. One such way is as follows: First define a graph on the vertex set $P$ by adding edges forming a tree for each element in $L$, and then consider a linear realization of this graph with the property that the edges in a tree corresponding to an element in $L$ are all collinear. Such a realization is called a string configuration.
It is important to note that a string configuration is a geometric realization of a graph with certain edges collinear, rather than a geometric realization of the incidence geometry in terms of points and lines.

A string configuration of the incidence geometry cannot exist without there being a corresponding linear realization of the same incidence geometry. Given a linear realization of the incidence geometry, the edges $(p,q)$ in the graph that constitutes the string configuration are placed along the line spanned by $p$ and $q$ in the linear realization.
There may however be several string configurations coming from the same linear realization of an incidence geometry; by choosing distinct  tree graphs representing the  lines, different string configurations are obtained.

\subsection{The $2$-plane matroid, concurrence geometries and parallel redrawings}

Whiteley introduced the $k$-plane matroid as a generalisation of the ``picture matroid'', as a tool in scene analysis \cite{Whiteley89, SerWhi}.
Another application of the $k$-plane matroid is in the study of infinitesimal rigidity of rod configurations.

Consider an incidence geometry $S=(P,L,I)$. We want to study the set of all linear realizations of the incidence geometry in the Euclidean plane with specified line slopes.
Some linear realizations will be degenerate in that some points coincide. Crapo showed that the linear realizations of $S$ with different degeneracies form a combinatorial lattice \cite{Crapo84}.

The collinearity of three finite points $p$, $q$ and $r$ with projective coordinates $(x_p:y_p:1)$, $(x_q:y_q:1)$ and $(x_r:y_r:1)$ defines an equation $$\det\left(\begin{array}{ccc}x_p&x_q&x_r\\y_p&y_q&y_r\\1&1&1\end{array}\right)=0.$$
If a line in the configuration is incident with exactly three points, then the line defines one such equation.
More generally, if a line $\ell$ in the configuration is incident with $m=m(\ell)$ points $p_1,\dots,p_m$, then the triples $\{p_1,p_2,p_i\}$ for $i\in \{3,\dots m\}$ define $m-2$ such equations.
(A line that is incident with fewer than three points defines no such equation.)

For every incidence $(p,\ell)\in I$, where $\ell=\langle q,r\rangle$ for any points $q$ and $r$ on $\ell$ such that $p\not\in \{q,r\}$, create a new equation from the equation
$$
\det\left(\begin{array}{ccc}x_p&x_q&x_r\\y_p&y_q&y_r\\1&1&1\end{array}\right)=x_p(y_q-y_r)+y_p(x_r-x_q)+x_qy_r-x_ry_q=0$$
  by first rewriting it as
  $$x_p\frac{y_q-y_r}{x_q-x_r}-y_p+\frac{x_qy_r-x_ry_q}{x_r-x_q}=0$$
and then fixing the  slope $\delta=\frac{y_q-y_r}{x_q-x_r}$. 

For a fixed line slope, the new equation has three indeterminates: the planar coordinates $x_p$, $y_p$ of the point $p$ and the $y$-intercept $\frac{x_qy_r-x_ry_q}{x_r-x_q}$ of the line $\ell$. 

A total of $|I|$ equations is obtained in this way, defining a system of equations in $|L|+2|P|$ indeterminates: one $y$-intercept for every line (lines are assumed to not be vertical) and two coordinates for every point.

Suppose $\rho$ is a linear realization of $S$. Then $\rho$ gives a set of line slopes, that we may fix. Conversely, any set of line slopes determines linear realizations, that may be trivial, meaning that all points are given the same coordinates. 

For an incidence geometry $S$ with a realization $\rho$, we call the $|I|\times \left(|L|+2|P|\right)$ matrix $M(S,\rho)$ defining the system of equations the concurrence geometry matrix of the incidence geometry with the given realization $\rho$.

A parallel redrawing of an incidence geometry with a fixed slope $f_j$ for every line $\ell_j$, is an assignment of a point $(x_i, y_i)$ to each $p_i \in P$ and an assignment of a number $h_j$ to each $\ell_j \in L$ such that if $( p_i,\ell_j) \in I$, 

\begin{equation}
f_jx_i + y_i + h_j = 0.
\label{parallel redrawing}
\end{equation}

As defined, a parallel redrawing of an incidence geometry with fixed line slopes is simply a linear realization of the incidence geometry in the plane, with the given line slopes. 

Furthermore, given a realization $\rho$ of $S$, the kernel of the matrix $M(S,\rho)$ consists of the set of triples $(x_i,y_i,h_j)$ that satisfy Equation \ref{parallel redrawing} for the set of slopes given by $\rho$. Hence, given a linear realization $\rho$ of the incidence geometry, the matrix $M(S,\rho)$ gives a set of linear realizations with the line slopes defined by $\rho$.

For any set of line slopes there is a space of trivial linear realizations in which all points have the same coordinates. This space has dimension two, corresponding to the two coordinates, which then also determine the $y$-intercept. 

If we can realize an incidence geometry with a given set of line slopes so that at least two points have different coordinates, there is a three-dimensional space of parallel redrawings, generated by two translations and one dilation \cite{Whiteley89}. It follows that if we can realize the incidence geometry in such a way that at least two points have distinct coordinates, the kernel of $M(S,\rho)$ has dimension at least three.

We say that a linear realization is proper if all combinatorial points are realized with distinct pairs of coordinates. In particular, the kernel of $M(S,\rho)$ will have dimension at least three for a proper linear realization with more than two points. 

So, if $S$ has a proper linear realization such that the rows of the concurrence geometry matrix $M(S,\rho)$ are independent, then necessarily $|I| \leq |L| + 2|P| -3$, since the kernel of $M(S, \rho)$ always has dimension at least three.

The $2$-plane matroid is a matroid defined on the set of incidences $I$ of an incidence geometry $(P,L,I)$ in terms of independent sets as follows: $I$ is independent if $|I'|\leq |L'|+2|P'|-2$, for any nonempty subset $I' \subset I$, where $P'\times L'\subseteq P\times L$ is the support of $I'$.

In some contexts, for example if we are considering graphs in the plane, rigidity can be described by the matroid defined in terms of the rows of a rigidity matrix. See \cite{GraSerSer} for more background on matroids and their use in combinatorial rigidity theory. Similarly, the $2$-plane matroid is related to the row matroid of the matrix $M(S,\rho)$.

If a set of incidences is independent in the $2$-plane matroid, then the rows of $M(S, \rho)$ are independent for almost all realizations $\rho$. This is essentially Theorem 4.1 in \cite{Whiteley89}. This means that if $|I|=|L|+2|P|-2$, $S$ will not have a proper linear realizations for most choices of line slopes $\rho$. More specifically, any choice of realization $\rho$ such that the rows of $M(S, \rho)$ are independent, will yield a linear realization with a two-dimensional space of parallel redrawings. This means that $\rho$ must be trivial, and so must any realization with the same line slopes.

\subsection{Notions of rigidity}
In this subsection we survey the distinct notions of rigidity that corresponds to the different geometric realizations of incidence geometries in Section \ref{sec:structures}. 

\subsubsection{Notions of rigidity for rod configurations and graphs}

We say that a linear realization of a rank two incidence geometry as points and lines in the Euclidean plane is (continuously) flexible if there is a continuous motion of some of its points and lines, other than the Euclidean motions of the entire configuration,  that preserves the incidences of the configuration and the distances between the points on the same line. We say that a configuration of points and lines is (continuously) rigid in the Euclidean plane if it is not flexible.

In the motions we consider, points on the same line never move in relation to each other; the lines are rigid bodies. Therefore it is natural to think of such linear realizations as configurations of rods (the lines) and pin-joints (the points), and call them rod configurations. A rod configuration is therefore a linear realization as points and lines, together with the motions of the linear realization. Rod configurations
can be seen as special cases of body and joint frameworks.

We say that a rod configuration is minimally rigid if no rod can be removed from the configuration without it becoming flexible. Note that we do not allow the removal of joints, and if a joint belongs to only one rod, then the removal of that rod would result in a flexible configuration, since that joint would then be able to move independently of the rest of the rod configuration.

An infinitesimal motion of a rod configuration is an assignment of a vector $v \in \mathbb{R}^2$ to each point $p \in P$ such that restricted to each rod, the vectors define the linear part of a Euclidean rigid motion.
A rod configuration is infinitesimally rigid if any infinitesimal motion of the rod configuration is the linear part of a Euclidean rigid motion. We call such infinitesimal motions the trivial infinitesimal motions of the rod configuration.

For planar rod configurations with at least two distinct points there are three independent trivial infinitesimal motions, coming from the three generators of the Euclidean planar group: one rotation and two translations. If there is a non-trivial infinitesimal motion, then the rod configuration is said to be infinitesimally flexible.

\subsubsection{Characterizing rigidity of graphs in the plane}

In the special case where the incidence geometry is a graph $G=(V,E)$, then a planar linear realization (the rod configuration) of $G$ is a so-called framework of the graph. In this case, the infinitesimal motions of the framework $(G,\rho)$ is an assignment of a vector $m_i \in \mathbb{R}^2$ to the point $\rho(v_i)$ for $v_i\in V$ such that  
\[
	{(m_i-m_j)^T}\cdot(\rho(v_i)-\rho(v_j))=0
\]
for all edges $(v_i,v_j)\in E$.

The following lemma relates infinitesimal rigidity to continuous rigidity.

\begin{lemma}[\cite{Gluck}]
If a framework $\rho$ of a graph $G$ is infinitesimally rigid, then it is rigid. 
\label{inf rig}
\end{lemma}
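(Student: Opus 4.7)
The plan is to use the standard rigidity-map argument, exploiting the fact that infinitesimal rigidity gives maximal rank of a smooth map whose level sets are the edge-length equivalence classes of configurations.

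First I would set up notation: writing $V = \{v_1,\dots,v_n\}$, define the rigidity map
\[
f : \mathbb{R}^{2n} \to \mathbb{R}^{|E|}, \qquad f(\rho)_{(v_i,v_j)} = \|\rho(v_i) - \rho(v_j)\|^2.
\]
A framework $\rho$ is rigid precisely when a neighborhood of $\rho$ in $f^{-1}(f(\rho))$ coincides with the orbit of $\rho$ under the group $\mathrm{SE}(2)$ of planar Euclidean motions. The Jacobian of $f$ at $\rho$ is (up to a factor of $2$) the rigidity matrix $R(\rho)$ whose kernel consists precisely of the infinitesimal motions of $(G,\rho)$, as recalled just before the statement.

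Next I would translate the hypothesis. By assumption, the kernel of $R(\rho)$ has dimension exactly $3$ (the trivial infinitesimal motions generated by two translations and one rotation), so $R(\rho)$ has rank $2n-3$, which is the maximum possible rank. Since the rank of a matrix is a lower semicontinuous function of its entries, and since $2n-3$ is an upper bound on the rank of $R$ at every configuration, the rank is constant equal to $2n-3$ on an open neighborhood $U$ of $\rho$. The constant rank theorem then applies to $f|_U$, yielding that $f^{-1}(f(\rho)) \cap U$ is a smooth submanifold of dimension $2n-(2n-3)=3$ near $\rho$.

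The conclusion then comes from a dimension count against the Euclidean orbit. The map $\mathrm{SE}(2) \to \mathbb{R}^{2n}$ sending $g \mapsto g \cdot \rho$ is a smooth immersion at the identity (its differential has image equal to the trivial infinitesimal motions, a $3$-dimensional space), so the orbit $\mathrm{SE}(2) \cdot \rho$ is locally a smooth $3$-dimensional submanifold through $\rho$ that is contained in $f^{-1}(f(\rho))$. Two smooth submanifolds of a common ambient space, of the same dimension, with one contained in the other through a common point, must coincide in a neighborhood of that point. Hence every nearby configuration with the same edge lengths as $\rho$ is a Euclidean image of $\rho$, so $\rho$ is rigid.

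The one point that requires a little care is the passage from infinitesimal rigidity (a pointwise algebraic condition) to the constant-rank hypothesis needed by the implicit function theorem; this is exactly where one uses that the trivial motions always lie in $\ker R$, so that attaining the upper bound $2n-3$ at $\rho$ automatically propagates to a neighborhood. Everything else is standard smooth-manifold bookkeeping.
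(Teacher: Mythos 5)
The paper does not prove this lemma at all; it is quoted with a citation to Gluck, and the classical proof (due to Gluck and, in this form, Asimow--Roth) is precisely the rigidity-map argument you give. Your write-up is correct: the trivial motions lie in $\ker R$ at \emph{every} configuration (with at least two distinct points; if all points coincide the matrix is zero), so $2n-3$ is a global upper bound on the rank, lower semicontinuity makes the rank locally constant once it is attained, the constant rank theorem gives a local $3$-manifold structure on the fibre of the edge-length map, and the $3$-dimensional $\mathrm{SE}(2)$-orbit inside it must then fill a neighbourhood. Two small points are worth making explicit. First, your rank bookkeeping tacitly assumes the framework has at least two distinct points; this is harmless, since a framework on $n\geq 2$ vertices with all points coincident is never infinitesimally rigid and the case $n=1$ is trivially rigid. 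Second, what you obtain directly is \emph{local} rigidity: every nearby configuration with the same edge lengths lies in the orbit $\mathrm{SE}(2)\cdot\rho$. The paper's definition of rigidity is the absence of a nontrivial \emph{continuous} motion, so one should add the (standard) final step that a continuous flex produces, for small times, configurations in this orbit, and since the orbit map is a local diffeomorphism modulo a discrete stabilizer, the flex lifts to a continuous path of Euclidean isometries and is therefore trivial. With that sentence added, the argument is complete and is the same route as the cited source.
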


It is well known that the algebraic dependencies among the points assigned to the vertices can affect the flexibility and the infinitesimal flexibility of a graph realized in the plane.
A framework of a graph is called generic if its set of point coordinates is algebraically independent. The converse of Lemma \ref{inf rig} is not true in general, but it holds for generic frameworks \cite{Roth}. So for generic frameworks, rigidity is equivalent to infinitesimal rigidity.

Furthermore, by the following lemma, it makes sense to talk about generic rigidity of a graph. 

\begin{lemma} [\cite{Gluck, Graphs and Geometry}]
Let $G=(V,E)$ be a graph. If there is some infinitesimally rigid framework of a graph, then any generic framework of $G$ is rigid.
\label{generic rigidity}
\end{lemma}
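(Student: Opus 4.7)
The plan is to encode infinitesimal rigidity as the non-vanishing of a polynomial in the point coordinates, and then use the definition of genericity (algebraic independence) to conclude that the condition holds at every generic framework.

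First I would introduce the rigidity matrix $R(G,\rho)$ of size $|E|\times 2|V|$, whose rows encode the equations $(m_i-m_j)^T(\rho(v_i)-\rho(v_j))=0$ from the excerpt; its entries are linear polynomials in the coordinates of $\rho$. For any framework with at least two distinct points, the three-dimensional space of trivial infinitesimal motions lies in $\ker R(G,\rho)$, so $\rank R(G,\rho)\leq 2|V|-3$, and by the definition in the excerpt, $(G,\rho)$ is infinitesimally rigid precisely when $\rank R(G,\rho)=2|V|-3$.

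Next I would translate this into a polynomial condition. The inequality $\rank R(G,\rho)\geq 2|V|-3$ is equivalent to the non-vanishing of at least one $(2|V|-3)\times (2|V|-3)$ minor. Each such minor is a polynomial $f_J$ with integer coefficients in the $2|V|$ coordinate variables. By hypothesis there exists $\rho_0$ with $\rank R(G,\rho_0)=2|V|-3$, so some $f_J$ satisfies $f_J(\rho_0)\neq 0$; in particular $f_J$ is not the zero polynomial. If $\rho$ is any generic framework, then by definition its coordinates are algebraically independent over $\mathbb{Q}$, so no nonzero polynomial with rational coefficients can vanish on them. Therefore $f_J(\rho)\neq 0$, so $\rank R(G,\rho)=2|V|-3$ and $(G,\rho)$ is infinitesimally rigid. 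Applying the fact cited just above the lemma (Roth's theorem: for generic frameworks, infinitesimal rigidity and continuous rigidity coincide), we conclude that $(G,\rho)$ is rigid.

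The routine step is the matrix/minor bookkeeping; the conceptual step is passing from the existence of one infinitesimally rigid framework to genericity via the observation that a single nonzero polynomial inequality cannot be violated at an algebraically independent point. The main thing to state carefully is the upper bound $\rank R(G,\rho)\leq 2|V|-3$, which ensures that it suffices to exhibit one minor that is generically nonzero, and the appeal to the converse of Lemma \ref{inf rig} in the generic case, which is the only non-algebraic ingredient.
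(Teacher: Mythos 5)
Your argument is correct and is precisely the standard proof behind this result, which the paper itself only cites (to Gluck and Lov\'asz) rather than proving: infinitesimal rigidity is the non-vanishing of some $(2|V|-3)\times(2|V|-3)$ minor of the rigidity matrix, a nonzero polynomial cannot vanish at algebraically independent coordinates, and rigidity then follows. One small simplification: for the final step you only need the forward implication of Lemma~\ref{inf rig} (infinitesimal rigidity implies rigidity, valid for every framework), so the appeal to Roth's generic converse is unnecessary.
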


We say that a graph is generically rigid if all its generic frameworks are infinitesimally rigid, or, equivalently, rigid.
A graph is generically minimally rigid, if it is generically rigid, and the removal of any edge results in a graph that is not generically rigid. A famous result due to Geiringer, and later to Laman, says that in the Euclidean plane, all generically minimally rigid graphs with a given number of vertices have the same number of edges.

\begin{theorem}[Geiringer-Laman, \cite{PolGei1927, Laman}]
	Let $G=(V,E)$ be a graph. Then $G$ is generically minimally rigid in the Euclidean plane if and only if 
	\begin{itemize}
	\item $|E|=2|V|-3$
	\item $|E'| \leq 2|V'|-3$ for any nonempty subset $E' \subseteq E$, where $V'$ is the set of vertices in the subgraph generated by $E'$.
	\end{itemize}
  \label{thm_geiringer}
\end{theorem}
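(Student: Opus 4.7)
The plan is to prove both directions using the rigidity matrix $R(G,\rho)$, whose rows are indexed by edges and whose entries record the vectors $\rho(v_i)-\rho(v_j)$ in the appropriate slots; the framework is infinitesimally rigid precisely when $\rank R(G,\rho) = 2|V|-3$ (for $|V|\geq 2$), since the kernel always contains the three-dimensional space of trivial infinitesimal motions.

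For necessity, I would first observe that if $G$ is generically minimally rigid, then for a generic $\rho$ the rows of $R(G,\rho)$ are linearly independent (otherwise some edge could be removed without decreasing the rank, contradicting minimality by Lemma \ref{generic rigidity}), and the rank is exactly $2|V|-3$. This gives $|E|=2|V|-3$. For the hereditary inequality, pick any nonempty $E'\subseteq E$ with vertex support $V'$. Since the rows indexed by $E'$ form a submatrix of $R(G,\rho)$, they are also linearly independent; but when restricted to the coordinates of $V'$, the kernel of this submatrix contains the trivial infinitesimal motions of the subframework on $V'$, which has dimension $\min(2|V'|,3)$ whenever $|V'|\geq 2$. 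Independence of the rows then forces $|E'|\leq 2|V'|-3$ in the nontrivial cases, and the small cases are easy to check.

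For sufficiency, I would use a Henneberg-style inductive construction. The key combinatorial step is to show that any graph $G$ satisfying the two Laman conditions can be built from a single edge by a sequence of (i) Henneberg-1 moves (adding a new vertex joined to two existing vertices) and (ii) Henneberg-2 moves (removing an edge $uv$ and adding a new vertex joined to $u$, $v$, and one further vertex $w$). This reduction step is a counting argument: the Laman conditions force the existence of a vertex of degree $2$ or $3$, and in the degree-$3$ case one must show that some valid Henneberg-2 move exists, which relies on a careful application of the sparsity count to the neighborhood of the degree-$3$ vertex. I would then verify that each Henneberg move preserves generic rigidity: the degree-$2$ move is immediate because a new generic vertex attached by two edges to a rigid framework remains rigid; the degree-$3$ move is the genuinely delicate part, and I would handle it by the standard limiting / specialization argument, showing that for generic placements the rigidity matrix of the new graph has full rank $2(|V|+1)-3$ by taking a limit in which the split vertex collapses onto the deleted edge and using that the original rigidity matrix had full rank.

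The main obstacle is this last step, verifying that the Henneberg-2 move preserves generic rigidity. Both the combinatorial claim that a valid split vertex $w$ exists (requiring one to find a neighbor of the degree-$3$ vertex such that adding the edge across two of its other neighbors maintains the sparsity counts) and the geometric claim that generic rigidity is preserved under the move are nontrivial; I expect to spend most of the effort on a rank argument for the modified rigidity matrix, combined with Lemma \ref{generic rigidity} to transfer from ``some rigid framework exists'' to ``all generic frameworks are rigid.''
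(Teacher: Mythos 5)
The paper does not prove this statement at all: it is quoted as the classical Geiringer--Laman theorem with citations to Pollaczek-Geiringer and Laman, so there is no internal proof to compare against. Your outline is the standard route found in those sources and in the textbook literature (Graver--Servatius--Servatius, Lov\'asz): necessity via the rank of the rigidity matrix and the $3$-dimensional space of trivial motions restricted to each edge-induced subframework, sufficiency via Henneberg induction. That strategy is correct, and the necessity half as you sketch it is essentially complete (for nonempty $E'$ one always has $|V'|\geq 2$, so the trivial-motion space of the generic subframework has dimension exactly $3$ and independence of the corresponding rows gives $|E'|\leq 2|V'|-3$).

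However, as written the sufficiency half is an announced plan rather than a proof: the two steps you yourself flag as the main obstacles are precisely the substance of Laman's theorem, and neither is carried out. Concretely, (a) you must show that at a degree-$3$ vertex $x$ with neighbours $u,v,w$ some pair, say $u,v$, satisfies that $G-x+uv$ again obeys the sparsity counts; this needs the argument that if every such pair were blocked, the corresponding tight (or over-full) edge sets could be combined --- using that unions and intersections of tight sets are again sparse-respecting --- to produce a subset violating $|E'|\leq 2|V'|-3$ in $G$ itself. And (b) you must prove that the Henneberg-2 (edge-split) move preserves generic infinitesimal rigidity; the usual specialization argument places the new vertex on the line through $\rho(u)$ and $\rho(v)$, shows that a row dependence of the enlarged rigidity matrix at this special position would force a dependence involving the row of the deleted edge $uv$ in the original matrix, and then invokes the fact that maximal rank at one position implies maximal rank generically (this last transfer is what Lemma \ref{generic rigidity} gives you). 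Until (a) and (b) are written out, the inductive step is a gap, not a detail; everything else in your proposal is routine.
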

 
For linear realizations of graphs in the plane, there is a one-to-one correspondence between infinitesimal motions and parallel redrawings \cite{Crapo85, Whiteley88, CraWhi}. The trivial infinitesimal motions correspond to the trivial parallel redrawings.

\subsubsection{Characterizing rigidity of string configurations in the plane}

A string configuration realizing an incidence geometry $S=(P,L,I)$ is infinitesimally rigid if when considered as a framework of a graph, it is infinitesimally rigid.
Whiteley proved the following result, characterizing which incidence geometries have realizations as minimally infinitesimally rigid string configurations.

\begin{theorem}[Whiteley \cite{Whiteley89}]
An incidence geometry $S=(P,L,I)$ has a realization as a minimally infinitesimally rigid string configuration if and only if 
\[
	|I| = |L| + 2|P|-3 
\]
and 
\[
	|I'| \leq |L'| + 2|P'| - 3
\]
for any proper subset $I' \subset I$.

\label{Whiteley_5.2}
\end{theorem}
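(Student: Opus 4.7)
The plan is to reduce infinitesimal rigidity of the string configuration to the rank of the concurrence geometry matrix $M(S, \rho)$, and then use the cited characterization of independent sets in the $2$-plane matroid to translate between the count conditions and independence of the rows of $M(S, \rho)$.

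The key reduction is the following: the string configuration realizing $S$ with realization $\rho$ is infinitesimally rigid if and only if $\rank M(S, \rho) = |L| + 2|P| - 3$. I would prove this using the bijection between infinitesimal motions and parallel redrawings of graph frameworks (mentioned in the excerpt), together with the crucial observation that since the tree edges representing each line $\ell$ share the slope of $\ell$ and form a connected subgraph, any parallel redrawing of the string configuration graph keeps the points on $\ell$ collinear. Hence parallel redrawings of the string configuration graph correspond bijectively to elements of $\ker M(S, \rho)$, with the $h_j$-components determined by the point coordinates. Equivalently, one can row-reduce $M(S,\rho)$ by subtracting a reference row for each line from the other rows for that line, eliminating the $y$-intercept columns and obtaining $\rank M(S, \rho) = |L| + \rank(\text{rigidity matrix of the string graph})$, from which the claim follows since a planar framework is infinitesimally rigid precisely when its rigidity matrix has rank $2|P|-3$.

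With this reduction, the ``only if'' direction is immediate. Minimal infinitesimal rigidity of the string configuration forces its $|I|-|L|$ edges to contribute $|I|-|L|$ independent rows to a rigidity matrix of rank $2|P|-3$, giving $|I| = |L| + 2|P| - 3$ and all $|I|$ rows of $M(S, \rho)$ independent. For any proper $I' \subsetneq I$, the submatrix $M(S', \rho')$ has independent rows (inherited from $M$) and its kernel has dimension at least $3$ for the induced proper realization, yielding $|I'| \leq |L'| + 2|P'| - 3$.

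For the ``if'' direction, the hypothesis $|I'| \leq |L'| + 2|P'| - 3$ is strictly stronger than the $2$-plane matroid independence condition $|I'| \leq |L'| + 2|P'| - 2$, so $I$ is independent in the $2$-plane matroid. By the cited Theorem 4.1 of \cite{Whiteley89}, the rows of $M(S, \rho)$ are then independent for almost all $\rho$. Choosing such a generic proper $\rho$ (which exists because the gap between the $-3$ and $-2$ counts guarantees a nontrivial degree of freedom beyond the all-points-coincident configuration) gives $\rank M(S, \rho) = |I| = |L| + 2|P| - 3$, hence by the reduction the string configuration is infinitesimally rigid, and minimally so because the string graph has exactly $2|P|-3$ edges, each corresponding to an independent row. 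The main obstacle is precisely this last step: invoking the $2$-plane matroid characterization depends on the nontrivial Theorem 4.1 of \cite{Whiteley89}, which is typically proved by an inductive Henneberg-style argument on incidences and is the technical heart of the result.
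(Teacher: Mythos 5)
Your overall route --- reducing infinitesimal rigidity of the string configuration to the rank of the concurrence geometry matrix $M(S,\rho)$ via the correspondence between infinitesimal motions and parallel redrawings, and then invoking generic independence of the rows of $M(S,\rho)$ for incidence sets that are suitably sparse --- is exactly the strategy the paper attributes to Whiteley, and the row reduction giving $\rank M(S,\rho)=|L|+\rank(\text{rigidity matrix of the string graph})$ is sound, with the caveat that it only relates to the rigidity matrix when joined points are realized at distinct positions (the rows of $M(S,\rho)$ depend only on the slopes, whereas a rigidity-matrix row vanishes on a zero-length edge). The ``only if'' direction is essentially correct, modulo the implicit convention, needed even to state the theorem, that the subset inequality is only asserted for subsets whose support contains at least two points (a single incidence has $|I'|=1>|L'|+2|P'|-3=0$).

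The genuine gap is in the ``if'' direction, at the step ``choosing such a generic proper $\rho$''. Theorem 4.1 of Whiteley gives row independence of $M(S,\rho)$ for almost all slope choices, but you separately need a linear realization with those generic slopes in which the points are pairwise distinct (at least on each line); without this there is no honest framework, the rank identity above does not apply, and no infinitesimally rigid string configuration is produced. Your justification --- that the kernel of $M(S,\rho)$ has dimension $3>2$, so there is a degree of freedom beyond the all-points-coincident realizations --- only yields a realization in which \emph{some} two points differ; it does not rule out that particular pairs of points are forced to coincide in \emph{every} realization with generic slopes, which is precisely what happens for incidence geometries that are too dense (the paper stresses that proper realizations with generic normals do not always exist, and exist exactly when the $-3$ count holds). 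Proving that the hypotheses of the theorem force proper realizations for almost all slopes is the ``key point'' of Whiteley's proof and a nontrivial result in its own right, on a par with Theorem 4.1; as written, your argument silently assumes it, so the ``if'' direction is incomplete at exactly the step the paper identifies as the heart of the matter.
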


If the incidence geometry is a graph $G=(V,E)$, so that $P=V$ and $L=E$, then $|I|=2|E|$. In this case, Theorem \ref{Whiteley_5.2} is simply the Geiringer-Leman Theorem, Theorem \ref{thm_geiringer}.

Whiteley proved Theorem \ref{Whiteley_5.2} using parallel redrawings and the concurrence geometry matrix. As a string configuration is a framework of graph, the parallel redrawings of a (planar) linear realization of $S$ are in one-to-one correspondence with the infinitesimal motions of its realizations as a string configuration with the same line slopes. 

We say that a string configuration is independent if the rows of its concurrence geometry matrix are independent. An incidence geometry has a realization as an independent string configuration if and only if $|I'| \leq |L'| + 2|P'| -3$ for all subsets of incidences $I'$ \cite{Whiteley89}.

A key point in the proof of Theorem \ref{Whiteley_5.2} is that if the incidence geometry satisfies the conditions in Theorem \ref{Whiteley_5.2}, then it has a proper linear realization for almost all choices of normals. As previously mentioned, it is not always true that an incidence geometry has proper linear realizations with generic normals. In fact, this is true if and only if the count of Theorem \ref{Whiteley_5.2} holds \cite{Whiteley89}. 

\subsubsection{Rigidity of body and joint frameworks and rod configurations in the plane}

A body and joint framework can be represented in terms of frameworks of graphs by replacing each body by a minimally infinitesimally rigid framework of a graph with vertex set including the points/joints of the body. This gives a framework of a graph representing the body and joint framework.

A body and joint framework is independent if the framework modeling the body and joint framework is independent as a string configuration, and (minimally) infinitesimally rigid if it is an (minimally) infinitesimally rigid string configuration. Recall that a string configuration is a framework of a graph in a (possibly) non-generic position that is determined by the incidence geometry, and that the string configuration is (minimally) infinitesimally rigid if it is so as a framework of a graph.

Whiteley gave a combinatorial characterization of minimal infinitesimal rigidity of body and joint realizations of incidence geometries, thereby generalizing Theorem \ref{thm_geiringer} to hypergraphs.
He also showed that an incidence geometry that has a minimally infinitesimally rigid body and joint realization, also has a realization as a minimally infinitesimally rigid rod configuration.

\begin{theorem}[Whiteley, \cite{Whiteley89}]
Given an incidence geometry $S=(P,L,I)$ the following are equivalent:
	\begin{enumerate}
	\item $S$ has an independent (minimally infinitesimally rigid) body and joint realization in the Euclidean plane.
	\item $S$ satisfies $2|I| \leq (=) 3|L| + 2|P| -3$, and for every subset of bodies with the induced subgraph of attached joints $2|I'| \leq 3|L'|+2|P'|-3$.
	\item $S$ has an independent (minimally infinitesimally rigid) body and joint realization in the Euclidean plane such that each body has all its joints collinear.
	\end{enumerate}
	\label{Whiteley5.4}
\end{theorem}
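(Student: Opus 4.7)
My plan is to proceed via the cycle of implications $(3) \Rightarrow (1) \Rightarrow (2) \Rightarrow (3)$. The first implication is immediate, since a body-and-joint realization in which each body has collinear joints is, in particular, a body-and-joint realization, so any independent (resp.\ minimally infinitesimally rigid) realization of the type in $(3)$ witnesses $(1)$.

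For $(1) \Rightarrow (2)$, I would use the framework modeling from the preceding definition: each body $\ell$ is replaced by a minimally infinitesimally rigid graph on a vertex set $V_\ell$ containing the joints $I(\ell)$, contributing $2|V_\ell|-3$ edges. By definition, independence (resp.\ minimal rigidity) of the body-and-joint framework is exactly independence (resp.\ minimal rigidity) of this graph framework, which by Theorem \ref{thm_geiringer} amounts to Laman's condition $|E'|\leq 2|V'|-3$ for all subgraphs. A direct count shows that the auxiliary vertices $V_\ell\setminus I(\ell)$ cancel out of Laman's inequality globally, producing $2|I|\leq 3|L|+2|P|-3$, with equality in the minimally rigid case. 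Restricting Laman's condition to subframeworks indexed by subsets of bodies yields the subset version of $(2)$.

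The central work is in $(2) \Rightarrow (3)$. I would construct an explicit body-and-joint realization with each body's joints collinear: for each body $\ell$, choose a line of generic slope and place the joints $I(\ell)$ generically along that line, then adjoin a single off-line auxiliary vertex $v_\ell$ so that $I(\ell)\cup\{v_\ell\}$ supports a minimally rigid graph on $2|I(\ell)|-1$ edges. The resulting modeling framework has $2|I|-|L|$ edges on $|P|+|L|$ vertices, and Laman's bound on it reduces, after the auxiliary vertices cancel, exactly to the body-joint count in $(2)$. A single-body computation then confirms that an off-line auxiliary vertex restores infinitesimal rigidity of each body in its special collinear position, forcing any infinitesimal motion restricted to a single body to be a rigid planar motion.

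I expect the main obstacle to be verifying Laman's condition on the full modeling graph in this special (collinear-within-bodies) position---not only for subgraphs respecting the body structure, where condition $(2)$ applies directly, but for arbitrary subgraphs. My plan here is to argue that for generic choices of line slopes, joint positions along the lines, and auxiliary vertex positions, any subgraph either respects the body structure (and so inherits the bound from $(2)$) or is ``broken off'' from some body and thereby gains slack from the transversal off-line auxiliaries. Combining this with the single-body rigidity computation above yields the required independent (resp.\ minimally infinitesimally rigid) body-and-joint realization with each body's joints collinear, establishing $(3)$ and closing the cycle.
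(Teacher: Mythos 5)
The paper itself does not prove this statement: it is quoted from \cite{Whiteley89}, and the only proof-related remark in the text is that Whiteley's argument for the implication $2 \implies 3$ uses exactly the framework you propose, namely a cone over the collinear joints of each body. So your architecture --- the trivial implication $(3)\Rightarrow(1)$, the cancellation count $\sum_\ell (2|V_\ell|-3) \leq 2\bigl(|P|+\sum_\ell(|V_\ell|-|I(\ell)|)\bigr)-3 \Leftrightarrow 2|I|\leq 3|L|+2|P|-3$ for $(1)\Rightarrow(2)$, and the fan/cone modeling graph with $2|I|-|L|$ edges on $|P|+|L|$ vertices for $(2)\Rightarrow(3)$ --- is consistent with Whiteley's route, and the first two implications are essentially complete.

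The gap is in $(2)\Rightarrow(3)$. Verifying Laman counts on arbitrary subgraphs of the modeling graph, even if carried out, only certifies \emph{generic} independence via Theorem \ref{thm_geiringer}; it says nothing about the specific non-generic position you must realize, in which the joints of each body are collinear. The degenerate triangle discussed in the paper's introduction is exactly the warning: it satisfies every subgraph count yet is infinitesimally flexible when its vertices are collinear. Your proposed remedy (``any subgraph either respects the body structure or gains slack from the off-line auxiliaries'') is again a counting statement, and counting cannot show that the rigidity matrix keeps full rank once the collinearity constraints are imposed; that rank statement in special position is the actual content of the theorem (it is why Theorem \ref{Whiteley5.4} yields a special case of the planar molecular conjecture). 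Whiteley closes this gap with his matroid-on-hypergraphs machinery: from the count in $(2)$ he produces a suitable linear realization of $S$ with appropriately generic slopes (the concurrence-geometry results behind Theorem \ref{Whiteley_5.2}), cones each line over its collinear points, and proves independence of that particular coned framework by relating its rigidity matrix to the matroid on the incidences, not by appealing to generic rigidity of the underlying graph. Your single-body observation (a fan over collinear joints with an off-line apex is infinitesimally rigid) is correct, but it does not globalize by counting alone, so as written your step $(2)\Rightarrow(3)$ assumes precisely what has to be proved.
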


In this article, unless otherwise stated, we say that a rod configuration is minimally rigid if removing any line results in a flexible rod configuration. Our notion of minimal infinitesimal rigidity of rod configurations is not the same as the notion that apppears in statement 3 of Theorem 2.5. In short, there are incidence geometries that have realizations as minimally infinitesimally rigid rod configurations in our context, but for which there is no minimally infinitesimally rigid body and joint realization such that each body has all its joint collinear. However, Theorem \ref{Whiteley5.4} shows that the converse is true; any incidence geometry that can be realized as a minimally infinitesimally rigid body and joint framework can also be realized as a minimally infinitesimally rigid rod configuration.

Furthermore, as seen in \cite{Projective Lens}, if $S$ has a realization as a rod configuration, it is possible to construct a body and joint framework (for example by replacing each rod with a cone on the points incident to the rod) with the same rigidity properties, infinitesimal and continuous. In fact, this is the same body and joint framework that is constructed in \cite{Whiteley89} to prove the implication $2 \implies 3$ of Theorem \ref{Whiteley5.4}. Hence, if $S$ has a realization as an infinitesimally rigid rod configuration, then $S$ has a realization as an infinitesimally rigid body and joint framework.

 As an example, the incidence geometry realized as a rod configuration in Figure \ref{2P-2 Example} is minimally rigid as a rod configuration, however it does not satisfy condition 2 of Theorem \ref{Whiteley5.4}, and does not have a realization as a minimally rigid body and joint framework. It does however have a realization as an infinitesimally rigid body and joint framework, which is not minimally infinitesimally rigid.

Tay and Whiteley independently characterized which incidence geometries have realizations as rigid body and hinge frameworks in $\mathbb{R}^d$, where a body and hinge framework is a body and joint framework such that each joint is incident to at most two bodies \cite{Whiteley88,Tay1984}. Tay and Whiteley jointly conjectured in \cite{TayWhi1984} that any incidence geometry that can be realized as a rigid body and hinge framework in $\mathbb{R}^d$ can be realized as a rigid body and hinge framework in $\mathbb{R}^d$ such that all hinges incident to a body lie in a common hyperplane. This is known as the molecular conjecture.

A special case of the molecular conjecture in the plane follows from Theorem \ref{Whiteley5.4}; namely the special case where the rigidity of the body and hinge framework is minimal. However Theorem \ref{Whiteley5.4} holds for general body and joint frameworks, not only body and hinge frameworks, allowing more than two bodies to meet at a point. 

Jackson and Jord\'an proved in \cite{JacJor08} that the molecular conjecture holds in the plane, and Katoh and Tanigawa proved in \cite{KatTan11} that the molecular conjecture holds in general.
Jackson and Jord\'an therefore solved the question about minimal rigidity for planar rod configurations in the special case when each point is incident to two lines only. 
Body and hinge structures are further studied in \cite{JacJor10} and \cite{Tay1989}.
None of these results solve the question of minimal infinitesimal rigidity for rod configurations in its generality. 

Frameworks of graphs that remain rigid with a given set of three points collinear have been classified by Jackson and Jord\'an in \cite{JacJor05}. Their result was extended to sets of points of arbitrary size by Eftekhari et. al. in \cite{Frameworks2019}.

\section{Minimal rigidity of rod configurations in the plane}
\label{Minimal rigidity}

The minimally infinitesimally rigid graphs in the plane are the bases (maximally independent sets) of the rigidity matroid defined on the set of edges of a complete graph. The independent sets of the rigidity matroid correspond to linearly independent rows of the rigidity matrix. See \cite{GraSerSer} for some background on matroids and their use in combinatorial rigidity theory.

The independent sets of the $2$-plane matroid  also correspond to linearly independent rows of a matrix, namely the rows of the matrix $M(S, \rho)$. However, if $S=(P,L,I)$ is an incidence geometry of rank two such that $I$ is maximally independent in the $2$-plane matroid, then $S$ has only the trivial linear realizations for almost all choices of line slopes. Whiteley characterized the incidence geometries that have minimally infinitesimally rigid realizations as string configurations, see Theorem \ref{Whiteley_5.2}. These incidence geometries are certainly independent in the $2$-plane matroid, but not maximally independent.

In this section we will give examples of planar rod configurations that are infinitesimally minimally rigid in another (rather natural) way, but again they do not correspond to bases of the $2$-plane matroid. 

We say that a rod configuration is minimally (continuously/infinitesimally/globally) rigid if it is rigid, but the removal of any rod results in a flexible rod configuration. 
When we remove a rod, we remove only the rod and no points. Note that a point which is not on any line can move independently of the rest of the rod configuration. Therefore, if we remove a line such that there is some point of the configuration incident only to that line, then the resulting rod configuration is flexible.

We wish to understand the rigidity properties of geometric $v_k$-configurations. As a step in this direction, we focus on incidence geometries in which all lines are incident to the same number of points.

We say that an incidence geometry $S=(P,L,I)$ is $k$-uniform if every line in $L$ is incident to exactly $k$ points. In a $k$-uniform incidence geometry $|I|= k|L|$, in which case the implication $2 \implies 3$ of Theorem \ref{Whiteley5.4} can be restated as follows:

\begin{cor}
Let $S=(P,L,I)$ be a $k$-uniform incidence geometry. If 
\begin{itemize}
\item $(2k-3)|L|=2|P|-3$ and
\item $(2k-3)|L'| \leq 2|P'|-3$ for every subset $L' \subseteq L$, 
\end{itemize}
then $S$ has a realization as a  minimally infinitesimally rigid rod configuration.
\label{Point-line count}
\end{cor}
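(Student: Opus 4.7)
The plan is to derive the corollary directly from Theorem \ref{Whiteley5.4} by substituting the $k$-uniformity hypothesis into the counting conditions appearing in statement (2) of that theorem. For a $k$-uniform incidence geometry every line is incident to exactly $k$ points, so $|I| = k|L|$. Moreover, if $L' \subseteq L$ is any subset of lines and $P'$ is the induced set of attached joints (those points incident to some line in $L'$), then every line in $L'$ still has all $k$ of its incident points lying in $P'$ by definition, so $|I'| = k|L'|$ as well.

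Next, I substitute these identities into the conditions of statement (2) of Theorem \ref{Whiteley5.4}. The equality $2|I| = 3|L| + 2|P| - 3$ becomes $2k|L| = 3|L| + 2|P| - 3$, which rearranges to the first assumed equality $(2k-3)|L| = 2|P| - 3$. The inequality $2|I'| \leq 3|L'| + 2|P'| - 3$ becomes $2k|L'| \leq 3|L'| + 2|P'| - 3$, which rearranges to the second assumed inequality $(2k-3)|L'| \leq 2|P'| - 3$. Hence the hypotheses of the corollary are exactly a restatement of statement (2) of Theorem \ref{Whiteley5.4} under the $k$-uniformity assumption. Applying the implication $(2) \Rightarrow (3)$ of that theorem, I conclude that $S$ has a minimally infinitesimally rigid body and joint realization in the Euclidean plane in which every body has all of its joints collinear, which is precisely a minimally infinitesimally rigid rod configuration realizing $S$.

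There is essentially no obstacle: the corollary is a direct algebraic repackaging of one implication of Theorem \ref{Whiteley5.4}. The only point that might tempt one to write more is verifying that the subset incidence count simplifies to $k|L'|$, but this is immediate from the way the induced joint set $P'$ is defined in statement (2) of Theorem \ref{Whiteley5.4}, together with $k$-uniformity of $S$.
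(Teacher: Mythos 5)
Your proof is correct and takes essentially the same route as the paper, which introduces the corollary precisely as the restatement of the implication $2 \Rightarrow 3$ of Theorem \ref{Whiteley5.4} obtained by substituting $|I| = k|L|$ and $|I'| = k|L'|$ under $k$-uniformity. The only gloss is the word ``precisely'' at the end: the paper is careful to note that the minimality in statement 3 of Theorem \ref{Whiteley5.4} is not literally the same notion as its own minimality for rod configurations (removal of any rod causes flexibility), but it also records that Theorem \ref{Whiteley5.4} yields the implication in the direction you need, so your conclusion stands.
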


In the setting of Theorem \ref{Whiteley5.4} the other implication also holds. However, in our context there are examples of rod configurations that are minimally infinitesimally rigid, but that do not satisfy the count of Theorem \ref{Whiteley5.4} and Corollary \ref{Point-line count}, see for example Figure \ref{2P-2 Example}. 

Note that $v_k$-configurations always have $|I|=k|L|$ and $|L|=|P|$, so $(2k-3)|L|=2|P|-3$ is never true for $v_k$-configurations with $k \geq 3$. If $k=2$, then $|P|=|L|=3$ is the only solution. However, $v_k$-configurations may have subconfigurations that satisfy the count given in Corollary \ref{Point-line count}.

\begin{figure}
	\begin{center}
	\begin{tikzpicture}
\filldraw[black] (0,0) circle (1pt);
\filldraw[black] (2,0) circle (1pt);
\filldraw[black] (4,0) circle (1pt);

\filldraw[black] (0,-2) circle (1pt);
\filldraw[black] (2,-2.5) circle (1pt);
\filldraw[black] (4,-3) circle (1pt);
\filldraw[black] (0.88,-1.11) circle (1pt);
\filldraw[black] (1.6,-1.2) circle (1pt);
\filldraw[black] (2.91,-1.36) circle (1pt);

\draw[thin] (0,0) -- (4,0);
\draw[thin] (0,-2) -- (4,-3);
\draw[thin] (0,0)--(2,-2.5);
\draw[thin] (2,0) -- (4,-3);
\draw[thin] (4,0) -- (0,-2);

\draw[thin, dashed] (0,0) -- (4,-3);
\draw[thin, dashed] (2,0) -- (0,-2);
\draw[thin, dashed] (4,0) -- (2,-2.5);
\draw[thin, dashed] (0.88, -1.11) -- (2.91,-1.36);

\end{tikzpicture}
	\end{center}
	\caption{A minimally rigid subconfiguration of the Pappus configuration.}
	\label{Pappus subconfiguration}
\end{figure}
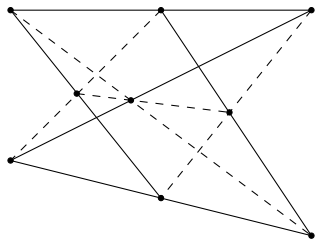

\begin{example}[Pappus configuration]
  Consider the rod configuration in Figure \ref{Pappus subconfiguration}. The union of the lines and the dotted lines form the Pappus configuration. By removing the dotted lines we obtain a subconfiguration  that clearly is minimally continuously rigid. Indeed, we cannot remove any of the lines with a point incident only to that line without the rod configuration becoming flexible. Removing either of the other two lines also  results in a flexible rod configuration. The existence of this minimally continuously rigid configuration tells us that Pappus configuration must be continously rigid in the position shown in Figure \ref{Pappus subconfiguration}.  

The same subconfiguration  satisfies the count in Corollary \ref{Point-line count}. It follows that it has at least one realization as a minimally infinitesimally rigid rod configuration. 

As we can see in Figure \ref{Pappus subconfiguration}, at least one of those realizations extends to a geometric realization of the Pappus configuration. We can therefore conclude that Pappus configuration has at least one realization as an infinitesimally rigid rod configuration.
\end{example}

It is not necessarily true that a rod configuration realizing a subconfiguration extends to a rod configuration realizing the whole configuration. For example the Fano plane has a subgeometry that can be realized as a minimally infinitesimally rigid rod configuration (see Example \ref{2P-2 ex}). However, any rod configuration of this subconfiguration that extends to a rod configuration of the Fano plane  must have all points in the same position, or all rods along the same line.

By the definition of minimal rigidity of rod configurations, it is clear that any rigid rod configuration has at least one minimally rigid subconfiguration. A minimally rigid subconfiguration of a $k$-uniform incidence geometry can only satisfy the counting condition in Corollary \ref{Point-line count} in certain cases, namely when $2|P|-3$ is divisible by $2k-3$.

If $k=2$, so that the incidence geometry is a graph, then $2|P|-3$ is always divisible by $2k-3=1$, and Corollary \ref{Point-line count} is the classical Geiringer-Laman Theorem, Theorem \ref{thm_geiringer}.

If $k=3$, then $2k-3=3$, and for any given $P$, one of $2|P|-1$, $2|P|-2$ and $2|P|-3$ is divisible by $3$. This gives a lower bound on how many lines a 3-uniform incidence geometry needs to have a realization as an infinitesimally rigid rod configuration, as the next proposition shows.

\begin{prop}
Let $S=(P,L,I)$ be a $3$-uniform incidence geometry such that $S$ has a realization as an infinitesimally rigid rod configuration. Assume that $3|L|=2|P|-3$, $3|L|=2|P|-2$ or $3|L|=2|P|-1$. Then $S$ has a realization as a minimally infinitesimally rigid rod configuration.
\label{minrig}
\end{prop}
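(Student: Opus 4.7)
The plan is a direct counting argument: under any of the three hypotheses I show that the given infinitesimally rigid realization $\rho$ of $S$ is automatically minimally so, because removing any single rod leaves too few constraints for the resulting rigidity matrix to have the rank required for infinitesimal rigidity.

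First I would set up the rigidity matrix $R=R(S,\rho)$. Because $S$ is $3$-uniform, every rod is a rigid body with three joints, and imposing that the three joint velocities form the linear part of a Euclidean rigid motion of the plane is a $(2\cdot 3 - 3)=3$-dimensional linear constraint, contributing exactly three independent rows to $R$. Hence $R$ has $3|L|$ rows and $2|P|$ columns. Since $\rho$ is infinitesimally rigid it must have at least two distinct points, so the space of trivial infinitesimal motions is three-dimensional, and infinitesimal rigidity of $\rho$ is equivalent to $\rank R = 2|P|-3$.

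Next I would consider the effect of removing a single rod $\ell \in L$. The rigidity matrix of the resulting rod configuration is obtained from $R$ by deleting the three rows coming from $\ell$, so it has $3(|L|-1)=3|L|-3$ rows. Under each of the three hypotheses of the proposition,
\[
3|L|-3 \in \{2|P|-6,\; 2|P|-5,\; 2|P|-4\},
\]
all of which are strictly less than $2|P|-3$. Therefore the rank of the reduced rigidity matrix is at most $3|L|-3 < 2|P|-3$, so the reduced configuration cannot be infinitesimally rigid. As this holds for every choice of $\ell \in L$, the realization $\rho$ is already minimally infinitesimally rigid, so $S$ has a realization as a minimally infinitesimally rigid rod configuration.

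The argument is short because the hypotheses $3|L| \in \{2|P|-3,\,2|P|-2,\,2|P|-1\}$ leave no slack: removing a single rod deletes exactly three rows, which in each case drops the row count below the rank $2|P|-3$ required for rigidity. The only step that needs any real care is the very first one, namely that a $3$-uniform rod contributes three independent rows to the rigidity matrix, rather than the two independent rows one would get from the distance-preservation equations of a triangle in collinear position. This is where the distinction between the rod model and the string/triangle model is essential, and is the only place I expect any subtlety in the proof.
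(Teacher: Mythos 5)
Your proof is correct, and the arithmetic is exactly the paper's ($3$ constraint rows per rod measured against the rank $2|P|-3$ needed for rigidity), but the route is genuinely different. The paper argues by contradiction at the combinatorial level: if a line $\ell$ could be removed with the configuration remaining infinitesimally rigid, then every point is still covered, so the subgeometry $S'=(P',L',I')$ satisfies $3|L'|=3|L|-3\leq 2|P'|-4$; by Theorem \ref{Whiteley5.4}, together with the fact that a rigid rod configuration yields a rigid body-and-joint framework (coning each rod), $S'$ admits \emph{no} infinitesimally rigid realization at all --- a slightly stronger conclusion than the proposition needs. You instead bound the rank of the rod rigidity matrix of the given realization directly after deleting one rod's rows, which is more self-contained (no appeal to Theorem \ref{Whiteley5.4} or to the rod-to-body-and-joint transfer) and also dispenses with the paper's case distinction about points incident only to $\ell$, since an uncovered point is subsumed by the same rank count. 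The price is that the whole weight rests on the claim you flag yourself: that each rod contributes at most three independent rows. This holds because the restriction of the $3$-dimensional space of infinitesimal rigid motions of the plane to the joints of a rod is still $3$-dimensional, hence the rod's constraint has codimension exactly $3$ --- \emph{provided} at least two of the rod's three joints are realized at distinct points; if all three coincided, the constraint would have codimension $4$ and the bound $3(|L|-1)$ on the reduced rank could fail (in the case $3|L|=2|P|-1$ the slack is only one row). So you should state explicitly that the given realization is nondegenerate in this sense (the implicit standing assumption for rod configurations), or note that the paper's combinatorial route avoids the issue entirely; with that one sentence added, your argument is complete.
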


\begin{proof}
  Assume for a contradiction that it is possible to remove some line $\ell$ from $S$ to obtain an infinitesimally rigid rod configuration. Let $L'=L \setminus \{ \ell \}$ and consider $S=(P', L', I')$. If $\ell$ can be removed without the rod configuration becoming infinitesimally flexible, it must hold that $|P'|=|P|$. Therefore $3|L'|= 3(|L|-1)=3|L|-3 \leq 2|P|-4=2|P'|-4$, where the inequality holds as $3|L| \leq 2|P|-1$. Then by Theorem \ref{Whiteley5.4}, $S'$ does not have a realization as an infinitesimally rigid body and joint framework and therefore $S'$ cannot have a realization as an infinitesimally rigid rod configuration.
Hence the infinitesimally rigid rod configuration realizing $S$ is minimally infinitesimally rigid.
\end{proof}

\begin{figure}
  \begin{center}
    \begin{tabular}{cc}
      \includegraphics[scale=0.2]{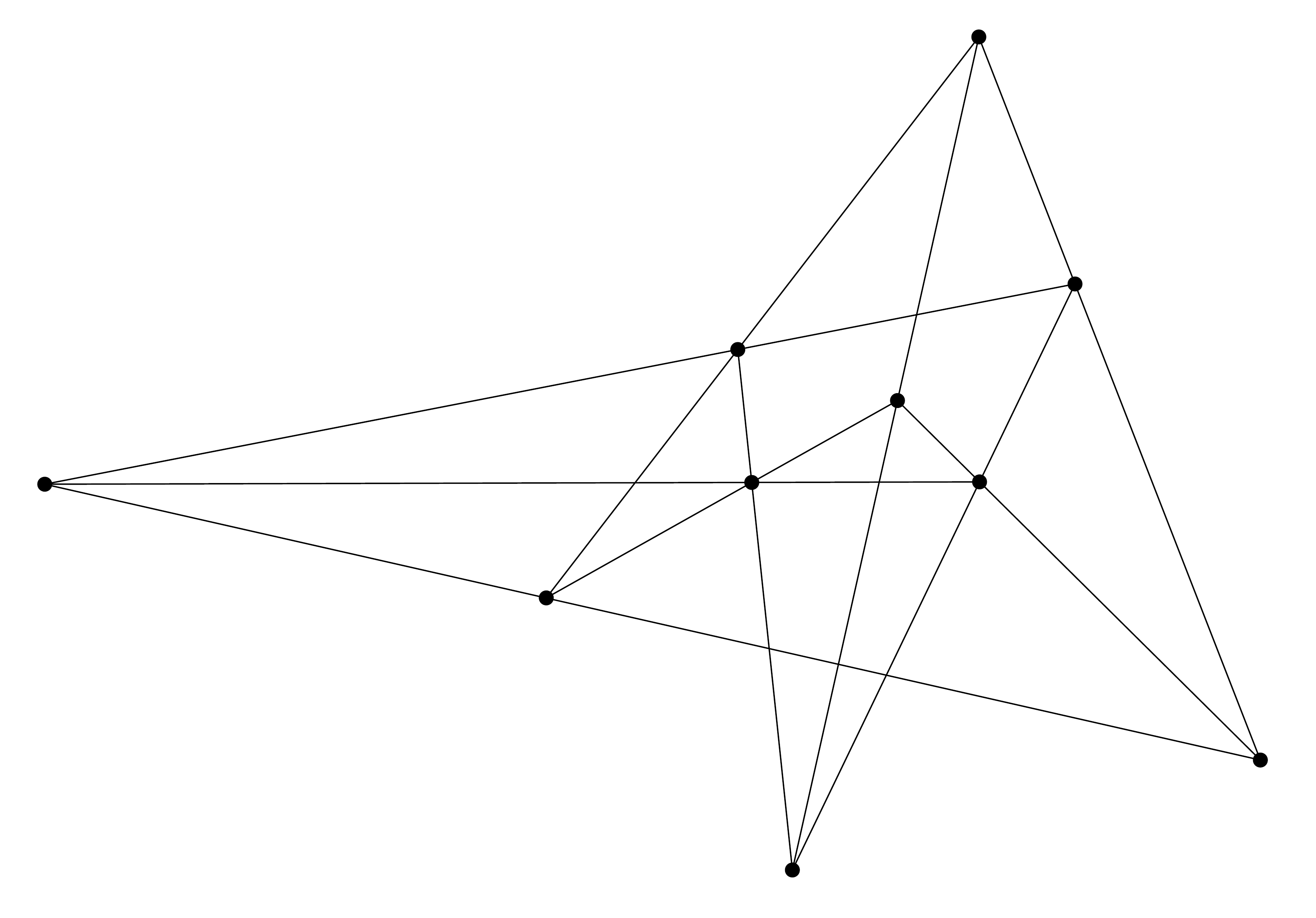}&  \includegraphics[scale=0.2]{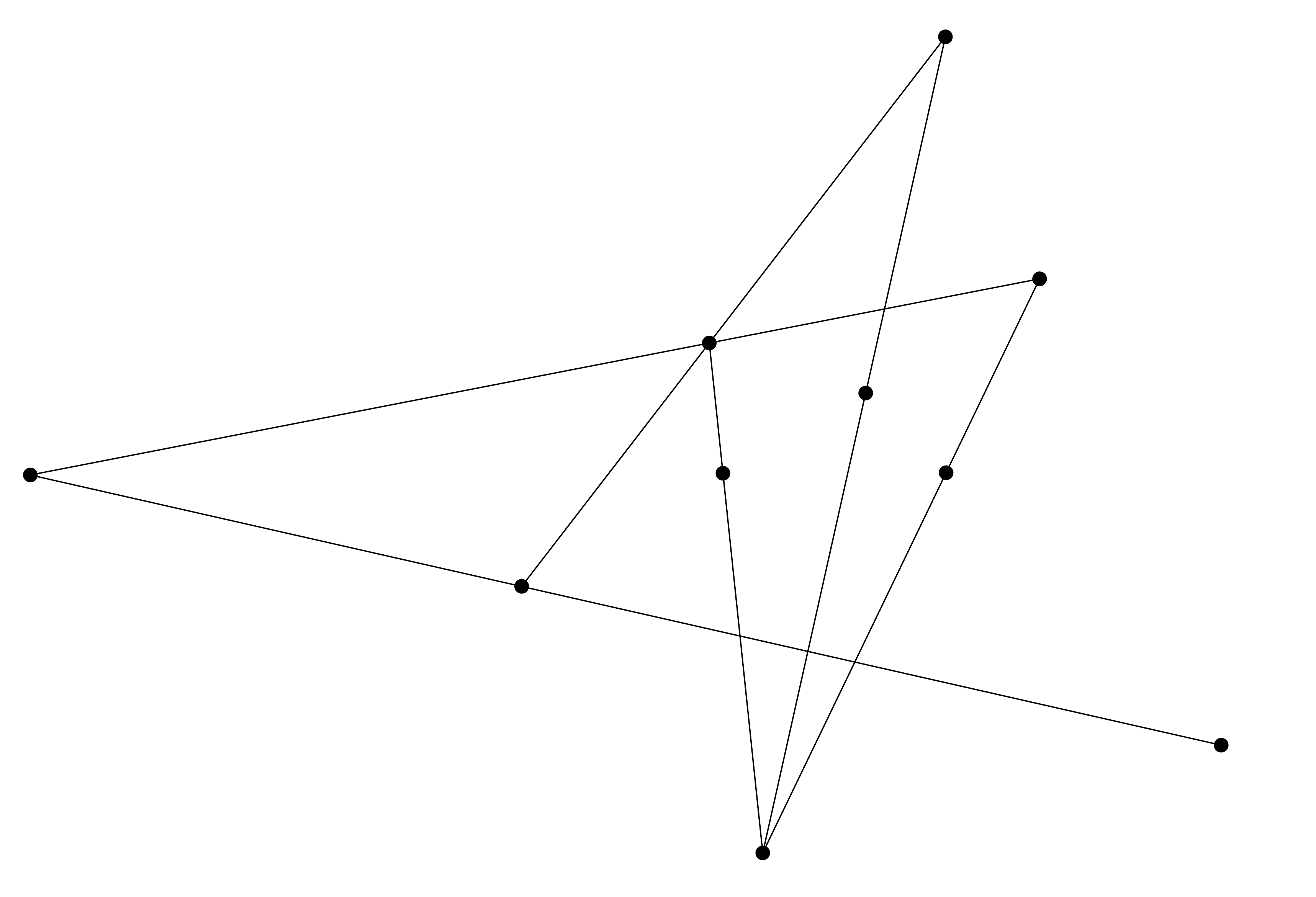}
      \end{tabular}
  \caption{Desargues configuration and a minimally rigid subconfiguration.}
  \label{Desargues}
  \end{center}
\end{figure}
 
\begin{example}[Desargues configuration]
The Desargues configuration, to the left in Figure \ref{Desargues}, is a $10_3$-configuration. As $2|P|-3=17$ is not divisible by $2k-3=3$, a minimally infinitesimally rigid subconfiguration of the Desargues configuration cannot satisfy the count in Corollary \ref{Point-line count}. 

Figure \ref{Desargues} also shows, to the right, a minimally infinitesimally rigid subconfiguration of the Desargues configuration. The subconfiguration satisfies $3|L|=2|P|-2$, but any strict subset $L' \subset L$ satisfies $3|L'| \leq 2|P'|-3$, where $P'$ is the set of points generated by $L'$.
\label{Desargues ex}
\end{example}

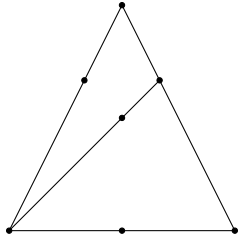
\begin{figure}
	\begin{center}
	\begin{tikzpicture}
	\filldraw[black] (0,0) circle (1pt);
	\filldraw[black] (-1.5,-3) circle (1pt);
	\filldraw[black] (1.5,-3) circle (1pt);
	\filldraw[black] (0.5,-1) circle (1pt);
	\filldraw[black] (-0.5,-1) circle (1pt);
	\filldraw[black] (0,-3) circle (1pt);
	\filldraw[black] (0,-1.5) circle (1pt);

	\draw[thin] (0,0) -- (1.5,-3) -- (-1.5,-3)--(0,0);
	\draw[thin] (-1.5,-3) -- (0.5,-1);
	\end{tikzpicture}
	
	\end{center}
	\caption{Minimally infinitesimally rigid example with $3|L|=2|P|-2$.}
	\label{2P-2 Example}
\end{figure}

\begin{example}
Consider the rod configuration in Figure \ref{2P-2 Example}. Clearly, it is minimally infinitesimally rigid, as removing a line would either leave a point not incident to any line, or leave a line with two points that are not incident to any other line, making the configuration flexible in either case. In this rod configuration, $P=7$ and $L=4$, so $3L=2P-2$.
\label{2P-2 ex}
\end{example}

For the incidence geometries in Example \ref{Desargues ex} and Example \ref{2P-2 ex}, we can consider $L'=L \setminus \{\ell\}$, where $\ell$ is any line with exactly one point incident only to $\ell$, and the points $P'$ generated by $L'$. In both cases we obtain an incidence geometry satisfying the condition of Corollary \ref{Point-line count}. This is the idea behind the next proposition.

\begin{prop}
Let $S=(P,L,I)$ be a $3$-uniform incidence geometry. 

\begin{enumerate}
	\item If $3|L|=2|P|-2$ and $3|L'|\leq 2|P'|-3$ for all $L'\subsetneq L$, with $P'$ the point set covered by $L'$, then $S$ has a realization as a minimally infinitesimally rigid rod configuration.
	\item If $3|L|=2|P|-1$, $3|L'| \leq 2|P'| -2$ for all $L' \subsetneq L$ and $3|L'| \leq 2|P'|-3$ for all $L' \subset L$ with $|L'| \leq |L|-2$ then $S$ has a realization as a minimally infinitesimally rigid rod configuration.
\end{enumerate}

\label{2|P|-2}
\end{prop}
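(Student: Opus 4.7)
The plan is to prove part 1 directly by reducing to Corollary \ref{Point-line count}, and then to reduce part 2 to part 1, following the idea sketched just before the proposition. In both parts the central operation is to remove from $S$ a single line $\ell$ that carries exactly one \emph{private} point (a point incident only to $\ell$), and to work with $S'=(P',L',I')$ where $L'=L\setminus\{\ell\}$ and $P'$ is the set of points covered by $L'$.

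The first step is a purely combinatorial one: showing that such a line $\ell$ exists. Writing $n_k$ for the number of points of $P$ incident to exactly $k$ lines, the relations $\sum_k n_k=|P|$ and $\sum_k k n_k=3|L|$ give $\sum_k (2-k)n_k=2|P|-3|L|$, which equals $2$ in part 1 and $1$ in part 2. Since $n_0=0$, this yields $n_1\ge 2$ in part 1 and $n_1\ge 1$ in part 2, so private points exist. Conversely, specialising the subset inequality in the hypothesis to $L\setminus\{\ell\}$ for a line $\ell$ with $j$ private points (so $|P'|=|P|-j$) forces $j\le 1$ in both parts by direct substitution (using $3|L'|\le 2|P'|-3$ in part 1, $3|L'|\le 2|P'|-2$ in part 2). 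Hence in both parts some line has exactly one private point.

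Fix such a line $\ell$. In part 1 one computes $3|L'|=2|P'|-3$, while the inequality $3|L''|\le 2|P''|-3$ for $L''\subsetneq L'$ is inherited from the hypothesis on $S$, so $S'$ satisfies the hypothesis of Corollary \ref{Point-line count} and admits a minimally infinitesimally rigid rod realization. In part 2 one computes instead $3|L'|=2|P'|-2$, and the sharper subset condition of part 2 (applicable because any $L''\subsetneq L'$ satisfies $|L''|\le|L|-2$ in $S$) verifies the hypothesis of part 1, which we invoke to obtain a minimally infinitesimally rigid realization of $S'$. In either case we extend to $S$ by drawing the rod $\ell$ through its two non-private points (already realized in $S'$) and placing the private point at an arbitrary additional position on that rod. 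Any infinitesimal motion of the extended realization restricts to a trivial motion on the rigid $S'$, so the rod $\ell$ moves as a rigid body and the private point moves trivially; hence $S$ is realized infinitesimally rigidly.

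The main obstacle is verifying that this extended realization is \emph{minimally} infinitesimally rigid. Removing a line $\ell''$ that has a private point frees that point and is visibly flexible, so the delicate case is a line $\ell''$ with no private points, for which removal preserves the point set $P$. Assume for contradiction that the resulting rod configuration is infinitesimally rigid. Then, as noted in the excerpt, it admits an infinitesimally rigid body-and-joint realization, and by successively removing bodies while preserving rigidity and joint coverage we obtain a minimally infinitesimally rigid body-and-joint subconfiguration $(P,L^{**})$ with $L^{**}\subseteq L\setminus\{\ell''\}$ and joint set $P$. By Theorem \ref{Whiteley5.4} and $3$-uniformity (so $|I^{**}|=3|L^{**}|$), this forces $3|L^{**}|=2|P|-3$. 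Hence $3|L|-3|L^{**}|$ equals $1$ in part 1 and $2$ in part 2, contradicting the integrality of $|L^{**}|$. Therefore removing any line yields a flexible configuration, and the constructed realization of $S$ is minimally infinitesimally rigid.
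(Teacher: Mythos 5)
The constructive half of your argument is essentially the paper's: locate a line carrying exactly one private point, delete it, check that the reduced geometry satisfies the count of Corollary \ref{Point-line count} (resp.\ the hypothesis of part 1), and re-attach the rod together with its private point. Your incidence double-count ($\sum_k(2-k)n_k=2|P|-3|L|$ combined with the subset inequality applied to $L\setminus\{\ell\}$) is a cleaner route to the existence of such a line than the paper's case analysis, and it is correct; the re-attachment argument (a motion that is trivial on $S'$ pins the new rod at two distinct points, hence moves it and its third point trivially) is also fine and matches the paper's construction.

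The gap is in your minimality verification. After assuming that $S$ minus a line $\ell''$ with no private point is rigid, you pass to a rigid body-and-joint realization and then ``successively remove bodies while preserving rigidity'' to reach what you call a minimally infinitesimally rigid subconfiguration $(P,L^{**})$, to which you apply Theorem \ref{Whiteley5.4} to force $3|L^{**}|=2|P|-3$. But a configuration that is merely irreducible under removal of whole bodies is minimal only in the sense used in this paper for rod configurations, not isostatic (independent and rigid) in the sense of Theorem \ref{Whiteley5.4}; the equality $2|I|=3|L|+2|P|-3$ holds only for the latter, and a rigid body-and-joint framework need not contain any spanning sub-collection of whole bodies satisfying it, since redundancy sits in individual incidence constraints rather than in whole bodies. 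The paper's own examples refute your extraction step: the configuration on the left of Figure \ref{2P min} is infinitesimally rigid with $3|L|=2|P|$, yet no line can be removed without losing rigidity, so no spanning $L^{**}$ with $3|L^{**}|=2|P|-3$ exists; indeed the very configurations produced by this proposition, with $3|L|=2|P|-2$ or $2|P|-1$, are counterexamples to the claim. The conclusion you want is nonetheless true, by the necessary direction of the count in Theorem \ref{Whiteley5.4}: an infinitesimally rigid body-and-joint (hence rod) realization needs $2|I'|\geq 3|L'|+2|P'|-3$, while after deleting a line with no private point $3$-uniformity gives $2|I'|=6|L'|$ with $3|L'|=2|P|-5$ (part 1) or $3|L'|=2|P|-4$ (part 2), both strictly below $2|P|-3$. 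This is exactly Proposition \ref{minrig}, which the paper invokes at this point; replacing your extraction argument by that citation (or by the count just given) closes the gap.
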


\begin{proof}
\begin{enumerate}
	\item First we will show that there is a line $\ell \in L$ such that if $L'=L \setminus \{\ell\}$, then $|P'|=|P|-1$. 

Suppose no such line exists. Then for any line $\ell \in L$ and $L'=L \setminus \{ \ell \}$, $|P'|=|P|$ or $|P'|=|P|-2$. Take a line $\ell$ so that $|P'|=|P|-2$. Then $3|L'|=3|L|-3=2|P|-2-3=2|P|-5=2|P'|-1$, which contradicts our assumption. Hence $L$ cannot contain any such line, and all lines $\ell \in L$ must be so that $|P|=|P'|$. In that case, any point is incident to at least two lines, so $|I| \geq 2|P|$. As $|I|=3|L|$ for any 3-uniform incidence geometry, it follows that $3|L| \geq 2|P| > 2|P|-2$ which, again, contradicts our assumptions. Hence there must be a line $\ell$ so that $|P'|=|P|-1$. 

Let $L'=L \setminus \{ \ell \} $ and consider the subgeometry $S'=(P',L',I')$. It will satisfy $3|L'|=3|L|-3=2|P|-2-3=2|P'|-3$, and for any subset $L" \subseteq L'$, its generated incidence geometry $S"=(P",L",I")$ will satisfy the inequality $3|L"| \leq 2|P"|-3$, as $L"$ is a strict subset of $L$. It follows from Corollary \ref{Point-line count} that $S'$ has a realization as a minimally infinitesimally rigid rod configuration. 

It is possible to add the line $\ell$ between the appropriate points in $P'$ and to then add a point on $\ell$. This cannot make the configuration infinitesimally flexible, so the result is an infinitesimally rigid rod configuration realizing $S$.

By Proposition \ref{minrig} $S$ this infinitesimally rigid rod configuration is also minimally infinitesimally rigid.

	\item As in case 1, we can prove that $L$ contains a line $\ell$ such that if $L'=L\setminus \{\ell\}$, then $|P'|=|P|-1$.

Let $L'= L \setminus \{ \ell \}$ and consider $S'=(P',L',I')$. Then $3|L'|=3|L|-3=2|P|-1-3=2|P'|-2$. Consider a subset $L" \subsetneq L'$ and its generated incidence structure $S"=(P",L",I")$. Then $3|L"| \leq 2|L"|-3$, as $L"$ is a subset of $L$ with $|L"|\leq |L|-2$.

By case 1, $S'$ has a realization as a minimally infinitesimally rigid rod configuration. It is again always possible to add the line $\ell$ between the appropriate points in $P'$ and to add the remaining point on $\ell$. The result is an infinitesimally rigid rod configuration realizing $S$.

By Proposition \ref{minrig} $S$ this infinitesimally rigid rod configuration is also minimally infinitesimally rigid.
\end{enumerate}
\end{proof}

The incidence geometries that satisfy either set of conditions given in Proposition \ref{2|P|-2} can all be constructed from an incidence geometry that has a realization as a minimally infinitesimally rigid rod configuration satisfying the conditions of Corollary \ref{Point-line count} by adding a lines incident to two existing points, and one new point only incident to the added line. Those incidence geometries that satisfy the first set of conditions can be constructed by adding one line in this way to an incidence geometry satisfying the conditions in Corollary \ref{Point-line count}, and those that satisfy the second set of conditions can be constructed by adding two lines.

\begin{figure}
  \begin{center}
  \includegraphics[scale=0.2]{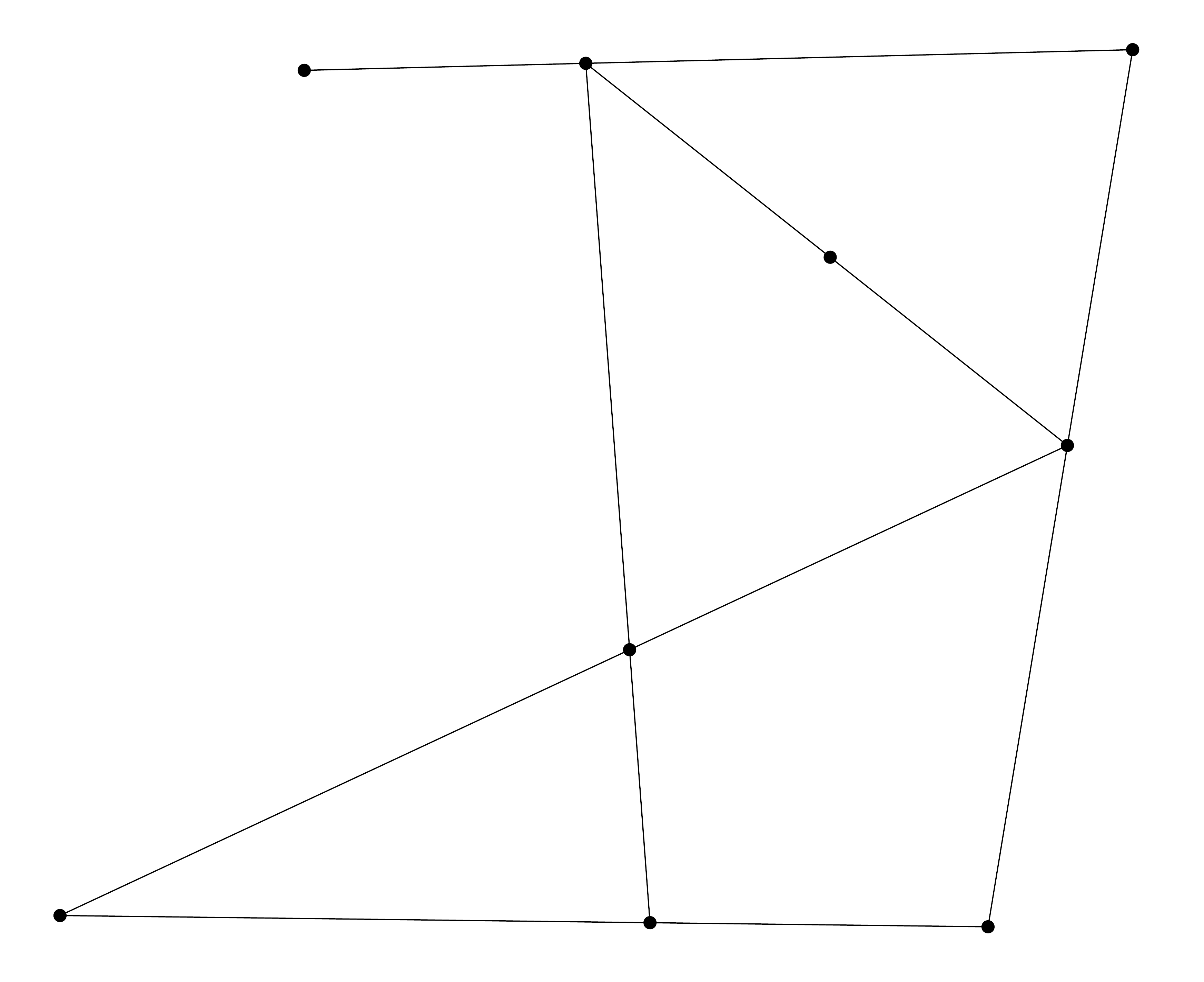}
  \caption{A rod configuration with $3L=2P$ that is rigid but not minimally rigid.}
  \label{2P NM}
  \end{center}
\end{figure}

\begin{example}
The rod configuration $(P,L,I)$ in Figure \ref{2P NM} has $|L|=6$ and $|P|=9$, so $3|L|=2|P|$. Any rod that does not have a point incident only to that rod can be removed without the rod configuration becoming infinitesimally flexible.  

Furthermore, removing either of the two lines that do have a point incident only to that line results in a rod configuration with $|L|=5$ and $|P|=8$ that satisfies the second set of conditions given in Proposition \ref{2|P|-2}. 

What this means is that the rod configuration in Figure \ref{2P NM} can be constructed from a rod configuration that satisfies the conditions of Corollary \ref{Point-line count} by adding lines between existing points and points incident only to those lines; similarly to how those incidence geometries that satisfy either set of conditions in Proposition \ref{2|P|-2} were constructed, only in this case, three lines are added. However, unlike the rod configurations that satisfy the conditions given in Proposition \ref{2|P|-2}, it is not minimally rigid. Therefore, the method of constructing minimally rigid rod configurations by adding lines with a single point only incident to that line is not guaranteed to work if three or more lines are added.
\end{example}

Corollary \ref{Point-line count} and Proposition \ref{2|P|-2} do not characterize the minimally infinitesimally rigid rod configurations; i.e. it is not true that a rod configuration is minimally infinitesimally rigid if and only if it satisfies the counts of either Corollary \ref{Point-line count} or Proposition \ref{2|P|-2}, as seen in the following example.

\begin{example}
Consider the rod configuration on the left in Figure \ref{2P min}. In this rod configuration, $3|L|=2|P|$. Yet it is easy to see that it is minimally infinitesimally rigid. 

An infinite sequence of minimally infinitesimally rigid rod configurations can be constructed by extending the leftmost rod configuration Figure \ref{2P min} by the structure in Figure \ref{min extension}. The rightmost rod configuration in Figure \ref{2P min} shows the next rod configuration in this sequence. Any rod configuration in this sequences has four more points and three more lines than the previous one. 

Suppose that a rod configuration in this sequence satisfies $3|L|=2|P|+k$. Then the next rod configuration in the sequence, realizing an incidence geometry $S'=(P',L',I')$, satisfies $3|L'|=3|L|+9=2|P|+k+9=2|P'|+k+1$, where the last equality holds since $|P'|=|P|+4$. All rod configurations in this sequence are minimally infinitesimally rigid, so there is a minimally infinitesimally rigid rod configuration that satisfies $3|L|=2|P|+k$ for any $k \geq 0$.
\end{example}

\begin{figure}
  \begin{center}
    \begin{tabular}{cc}
    \includegraphics[scale=0.3]{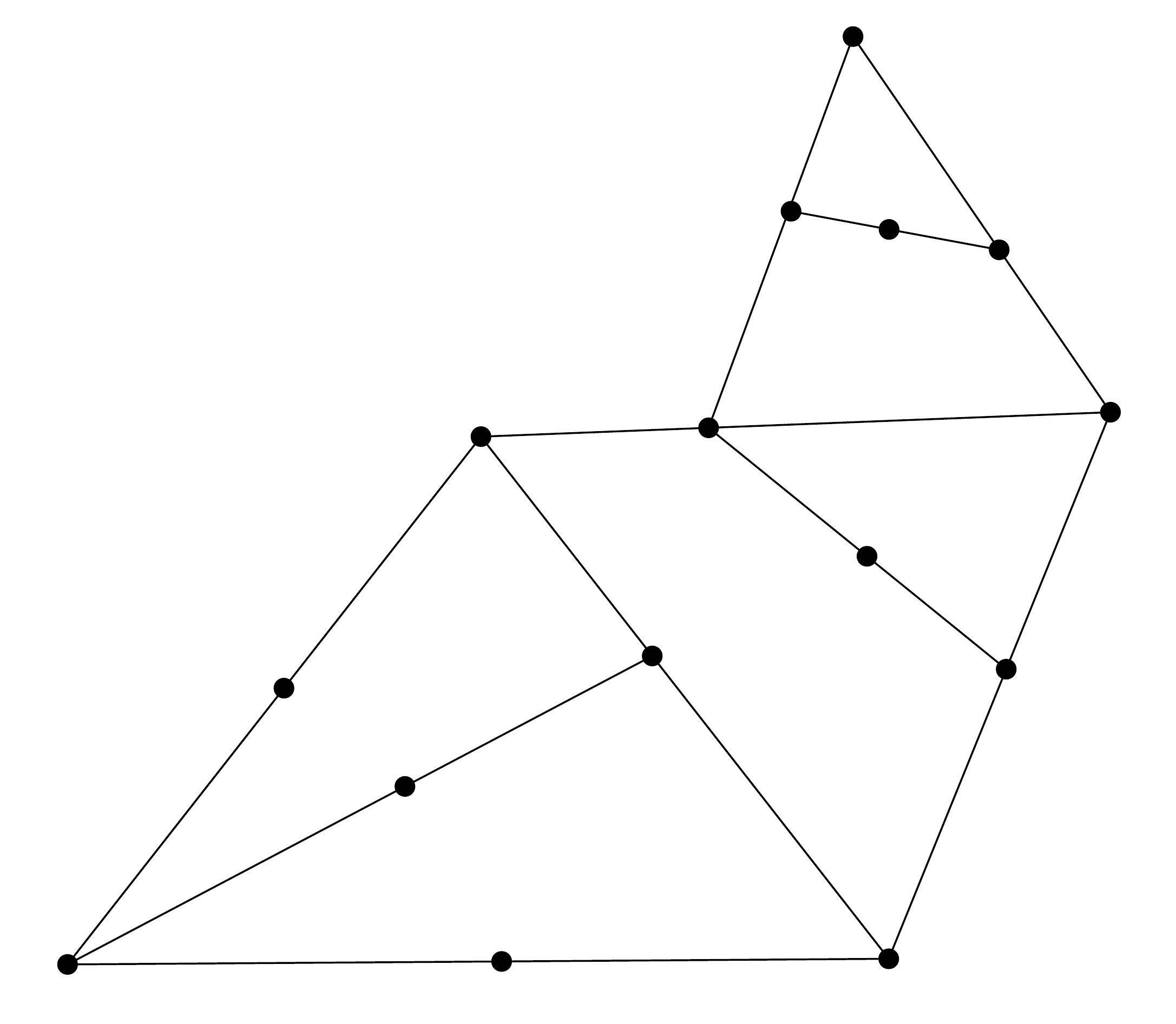}&  \includegraphics[scale=0.3]{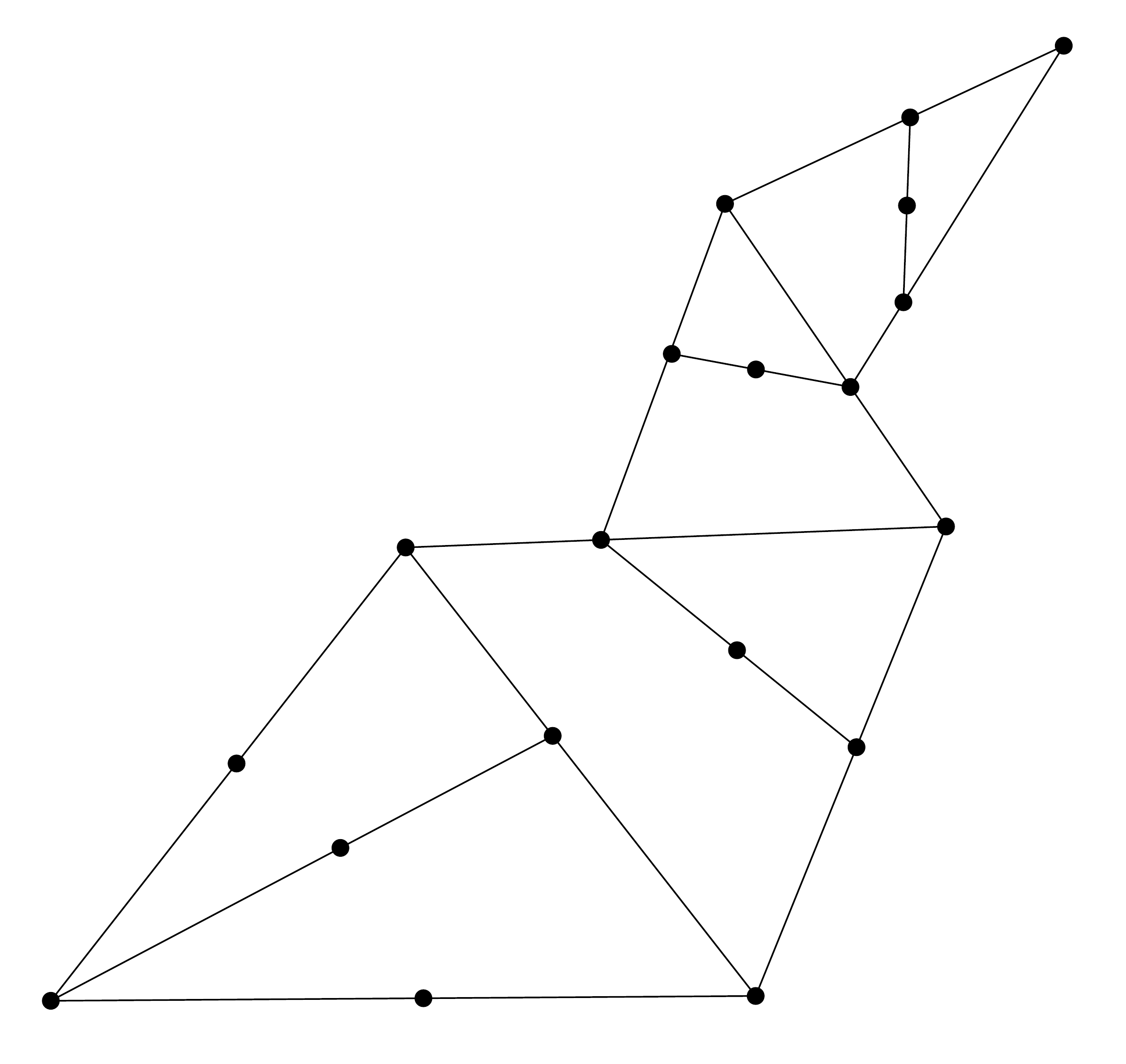}
    \end{tabular}
  \caption{Two minimally rigid rod configurations. The leftmost satisfies  $3|L|=2|P|$ and the rightmost satisfies $3|L|=2|P|+1$.}
  \label{2P min}
  \end{center}
\end{figure}

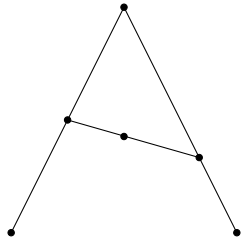
\begin{figure}
	\begin{center}
	\begin{tikzpicture}
	\filldraw[black] (0,0) circle (1.2pt);
	\filldraw[black] (-1.5,-3) circle (1.2pt);
	\filldraw[black] (1.5,-3) circle (1.2pt);
	\filldraw[black] (1,-2) circle (1.2pt);
	\filldraw[black] (-0.75, -1.5) circle (1.2pt);
	\filldraw[black] (0, -1.72) circle (1.2pt);

	\draw[thin] (0,0) -- (1.5,-3);
	\draw[thin] (-1.5,-3)--(0,0);
	\draw[thin] (1,-2) -- (-0.75, -1.5);
	
	\end{tikzpicture}
	
	\end{center}
	\caption{The structure used in the construction of an infinite family of minimally rigid rod configurations}
	\label{min extension}
\end{figure}

Finally, Figure \ref{9L15P} shows a minimally infinitesimally rigid rod configuration with 15 points and 9 lines. Recall that the leftmost rod configuration in Figure \ref{2P min} is minimally infinitesimally rigid with 15 points and 10 lines. This illustrates a difference to graphs; any minimally rigid graph with a specified number of vertices will have the same number of edges, but it is not true that all minimally rigid rod configurations with some fixed number of points will have the same number of lines.

\begin{figure}
	\begin{center}
	\includegraphics[scale=0.3]{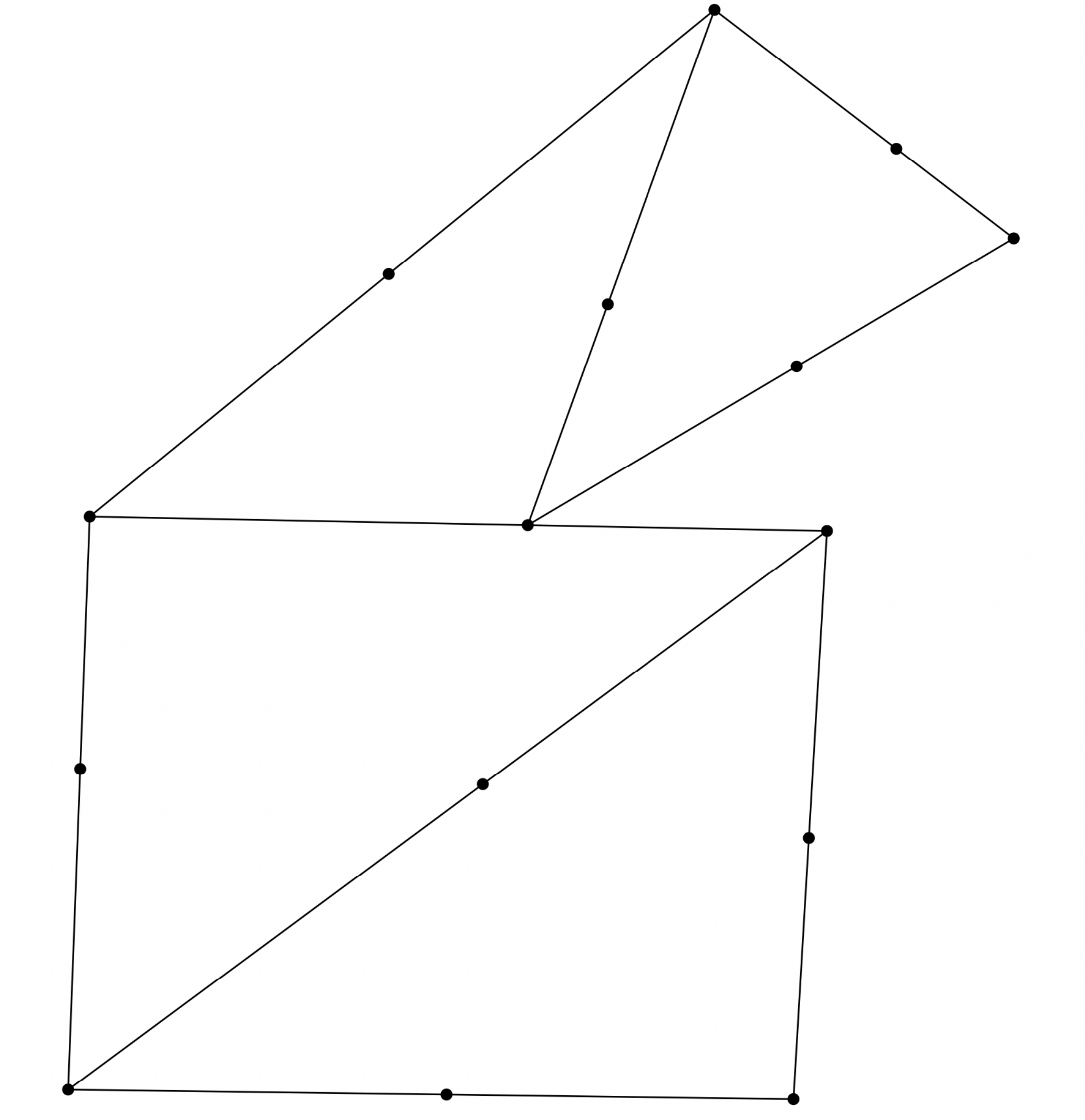}
	\end{center}
	\caption{A rod configuration with 9 lines and 15 points.}
	\label{9L15P}
\end{figure}

\section{Flexible $v_3$-configurations of rods and points in the plane}
\label{Flexible}

\subsection{Infinite families of flexible $v_3$-configurations with the motions of the polygons}

By a theorem of Steinitz, every combinatorial $v_3$-configuration can be realized as a rod configuration if one incidence is removed \cite{steinitz,grunbaum}.
The two smallest $v_3$-configurations have $7$ and $8$ points, and they are known as the Fano plane and the M\"obius-Kantor configuration, respectively. Neither of them can be realized geometrically as rod configurations. Figure~\ref{M-K} shows a geometric realization of the M\"obius-Kantor configuration with one line removed.

\begin{figure}
\begin{center}
	\includegraphics[scale=0.3]{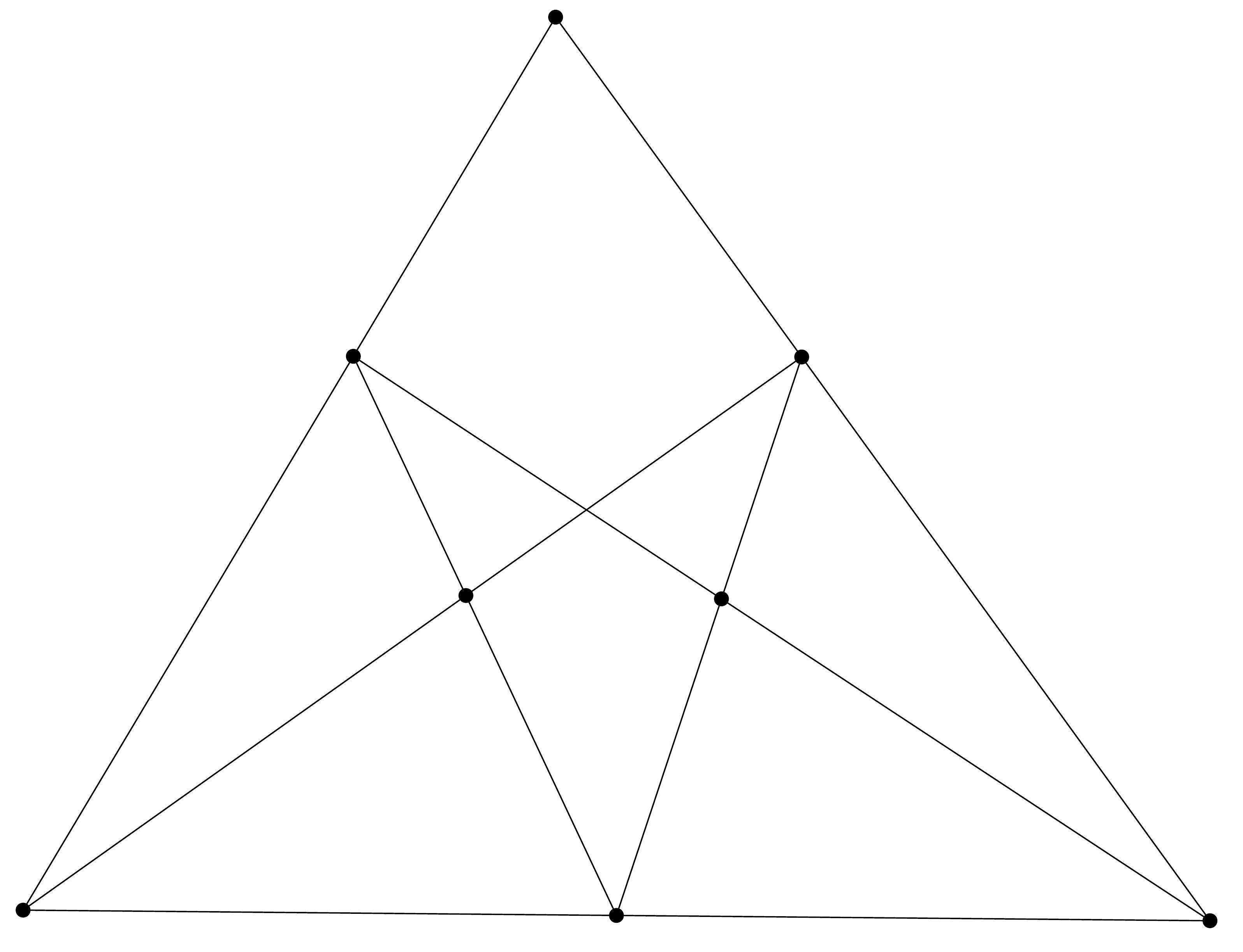}
\end{center}
\caption{The M\"obius-Kantor configuration with a line removed}
\label{M-K}
\end{figure}

For $v=9$ there are $3$ distinct combinatorial $v_3$-configurations, all geometrically realizable. For $v=10$ there are $10$ distinct combinatorial $v_3$-configurations, one of which is not geometrically realizable as a rod configuration. For $v=11$ and $v=12$ all combinatorial $v_3$-configurations have geometric realizations. 
It is however generally believed that, asymptotically in $v$, the combinatorial configurations that cannot be realized as rod configurations form a large portion of the set of all combinatorial configurations.

There are, however, not only realizable combinatorial $v_3$-configurations for all $v$ large enough, but even flexible geometric $v_3$-configurations, as we prove in the following theorem.

\begin{theorem}
  \label{thm:eventually}
  There are flexible geometric $v_3$-configurations for all $v \geq 28$.
\end{theorem}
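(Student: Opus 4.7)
The plan is to construct an explicit infinite family of flexible geometric $v_3$-configurations, then combine such configurations by disjoint union to fill in all sufficiently large $v$. The subsection heading, \emph{with the motions of the polygons}, suggests that the flex inherits from the flex of a polygonal substructure, and this guides the construction.

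First, I would set up a base parametric family. Take a cyclic $n$-gon of rods (for $n \geq 4$), which as a pure framework has a one-parameter flex, and attach auxiliary points and lines so that the resulting incidence geometry is $3$-uniform with each point incident to exactly three lines, producing a combinatorial $v_3$-configuration for some $v = v(n)$ depending linearly on $n$. The attachments should be chosen symmetrically, for instance by using the polygon vertices as shared points of three concurrent rods, so that the deformation of the polygon extends to a continuous deformation of the whole configuration; the one-parameter flex of the polygon survives because the auxiliary lines adjust along with the moving vertices. This step gives flexible geometric $v_3$-configurations whose $v$-values form an arithmetic progression, say $v \equiv r \pmod{d}$ for small $r,d$.

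Next, I would prove a simple combining lemma: if $S_1$ and $S_2$ are flexible $v_3$-configurations realized as rod configurations on disjoint point- and line-sets, then their disjoint union $S_1 \sqcup S_2$, realized by placing the two far apart in the plane, is a flexible $(v_1+v_2)_3$-configuration, since a flex of either summand immediately gives a non-trivial flex of the union. This lets me pass from a small set of seed examples to the additive semigroup they generate. Finally, I would apply a Chicken McNugget/Frobenius argument: producing flexible examples for two coprime values (or several consecutive values) of $v$, small enough that every $v \geq 28$ is a non-negative integer combination, completes the proof. The explicit numerical bound $28$ presumably drops out once the smallest seeds are identified: for instance, seeds at $v=9$ and $v=10$ would give everything from $v = 9 \cdot 10 - 9 - 10 + 1 = 72$, which is too weak, so one likely needs a denser set of small seeds such as flexible examples at $v \in \{9, 10, 11, 12\}$, from which every $v \geq 9$ is representable and $28$ is comfortably above the threshold.

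The main obstacle is the first step: exhibiting the seed flexible $v_3$-configurations and rigorously certifying their flexibility. Flexibility is intuitively clear for a pure polygon, but after promoting each polygon vertex to a point of degree three and ensuring each line carries three points, one must check that no concurrence forced by the additional incidences freezes the polygon's deformation parameter. I expect this to be handled either by writing down an explicit closed-form one-parameter family of coordinates for the configuration and verifying the incidences directly, or by exhibiting a non-trivial vector in the kernel of the associated rigidity/concurrence matrix that is not a trivial Euclidean motion. Once the seeds are in hand, the disjoint-union step and the Frobenius count are essentially bookkeeping.
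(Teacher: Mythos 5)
There is a genuine gap, and it sits exactly where you flag it: you never actually produce a seed. The paper's proof does not build a polygon of rods and then decorate it into a $v_3$-configuration; instead it takes $n$ small combinatorial $v_3$-configurations that are \emph{not} geometrically realizable (the Fano $7_3$, the M\"obius--Kantor $8_3$, and a non-realizable $10_3$), realizes each of them with one incidence removed using Steinitz's theorem, and then joins the $n$ pieces in a cycle by placing the deficient line $\ell_i$ of the $i$-th piece through the deficient point $p_{i+1 \pmod n}$ of the next. Each restored incidence pins a point onto a rigid rod, so consecutive pieces are connected by what is effectively a pin joint, and the closed chain of $n$ pieces has the motions of a closed $n$-gon linkage: $n-3$ degrees of freedom, hence flexible precisely when $n \geq 4$. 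The numerics then come from the semigroup $\langle 7,8\rangle$ (respectively $\langle 7,8,10\rangle$) together with the constraint $a+b\,(+c) \geq 4$, which yields all $v \geq 42$, improved to $v \geq 28$. Your proposal contains neither Steinitz's theorem nor the incidence-switch gluing, and the speculated small flexible seeds at $v \in \{9,10,11,12\}$ are not known to exist (the smallest flexible example the paper produces has $v=28$; whether any flexible, or even minimally rigid, geometric $9_3$-configuration exists is left open there). So the Frobenius bookkeeping, which you correctly identify as routine, has nothing to run on.

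Your combining lemma is also not a substitute for the paper's gluing. A disjoint union of two configurations placed far apart is flexible for the trivial reason that one component can be rigidly moved while the other stays fixed; no flexibility of the summands is needed, and the result is disconnected. If disconnected unions were admissible, the theorem would be immediate (and with a much better bound) from the mere existence of geometrically realizable $v_3$-configurations for every $v \geq 9$, which is clearly not the content intended: the paper's construction is explicitly arranged, via repeated incidence switches, to produce a single \emph{connected} combinatorial $v_3$-configuration whose flexibility comes from the cyclic linkage structure itself. Under that reading your union step fails to produce admissible configurations, and under the permissive reading it trivializes the statement rather than proving it in the intended sense. The essential missing idea, in short, is the paper's mechanism for gluing rigid pieces into one connected $v_3$-configuration through point--line incidences that act as joints, with Steinitz's theorem supplying the pieces.
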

\begin{proof}
  Take $n\geq 2$ disjoint combinatorial $v_3$-configurations $C_0,\dots,C_{n-1}$ (possibly $C_i\sim C_j$ with $i\neq j$) that cannot be realized geometrically as rod configurations.

  Use Steinitz's Theorem \cite{steinitz} to find disjoint geometric realizations of the $n$ disjoint copies of combinatorial configurations with one incidence removed in each configuration. Each configuration $C_i$ then contains a line $\ell_i$ with two incidences and a point $p_i$ with two incidences. Move each configuration independently using the rigid motions of the plane so that the geometric realization of the line $\ell_i$ passes through the geometric realization of the point $p_{i+1\pmod{n}}$.

  Combinatorially, this is a variant of  repeated use of the ``incidence switch'' \cite{stokes,grunbaum} on the pairs $(C_{i},C_{i+1\pmod{n}})$, which constructs a connected combinatorial configuration from the $n$ disjoint copies.

  Applying this construction to $a$ copies of the Fano plane and $b$ copies of the M\"obius-Kantor configuration gives us a geometric $v_3$-configuration for all parameters $v$ of the form $v=7a+8b$, $a,b\in\mathbb{Z}_{\geq 0}$. This is a numerical semigroup generated by the two coprime natural numbers $7$ and $8$. The largest natural number not on the form $7a+8b$ is $41$ (a.k.a. the Frobenius number of the numerical semigroup $\left\langle 7, 8\right\rangle)$. This bound can be improved by using configurations with other parameters. For example, the Frobenius number of $\langle 7,8,10\rangle$ is $19$, giving the bound $v\geq 20$.

  The rod configurations constructed in this way from $n$ rod configurations, isomorphic to either the Fano plane or the M\"obius-Kantor configuration,  have the motions of the $n$-gon. Therefore they are flexible if $n\geq 4$ (with $n-3$ degrees of freedom) and rigid if $n\leq 3$. It can easily be checked that all integers larger or equal to $42$ are of the form $7a+8b$ with $a+b\geq 4$.
  If we instead consider $\langle 7,8,10\rangle$ the bound becomes $28$, but it is likely that this bound can be improved. 
\end{proof}

For illustration of the construction used in the proof of Theorem \ref{thm:eventually}, see Figure \ref{7n_3}. It is possible that the bound $v \geq 28$ could be improved further, for example by in the construction also using geometrically realizable $v_3$-configurations with an incidence removed.

\begin{figure}
  \begin{center}
    \begin{tabular}{ccc}
	\includegraphics[scale=0.15]{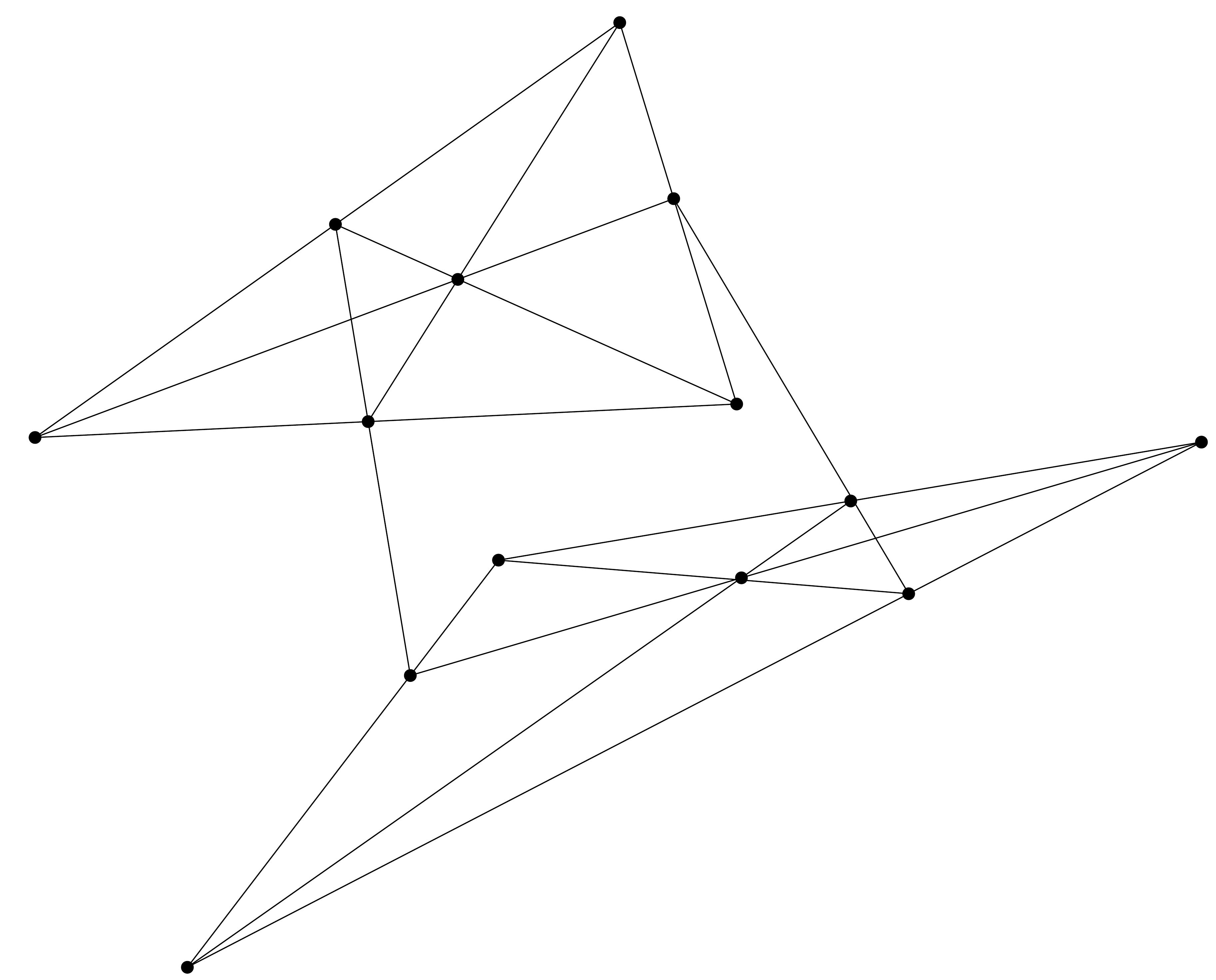}&
	\includegraphics[scale=0.18]{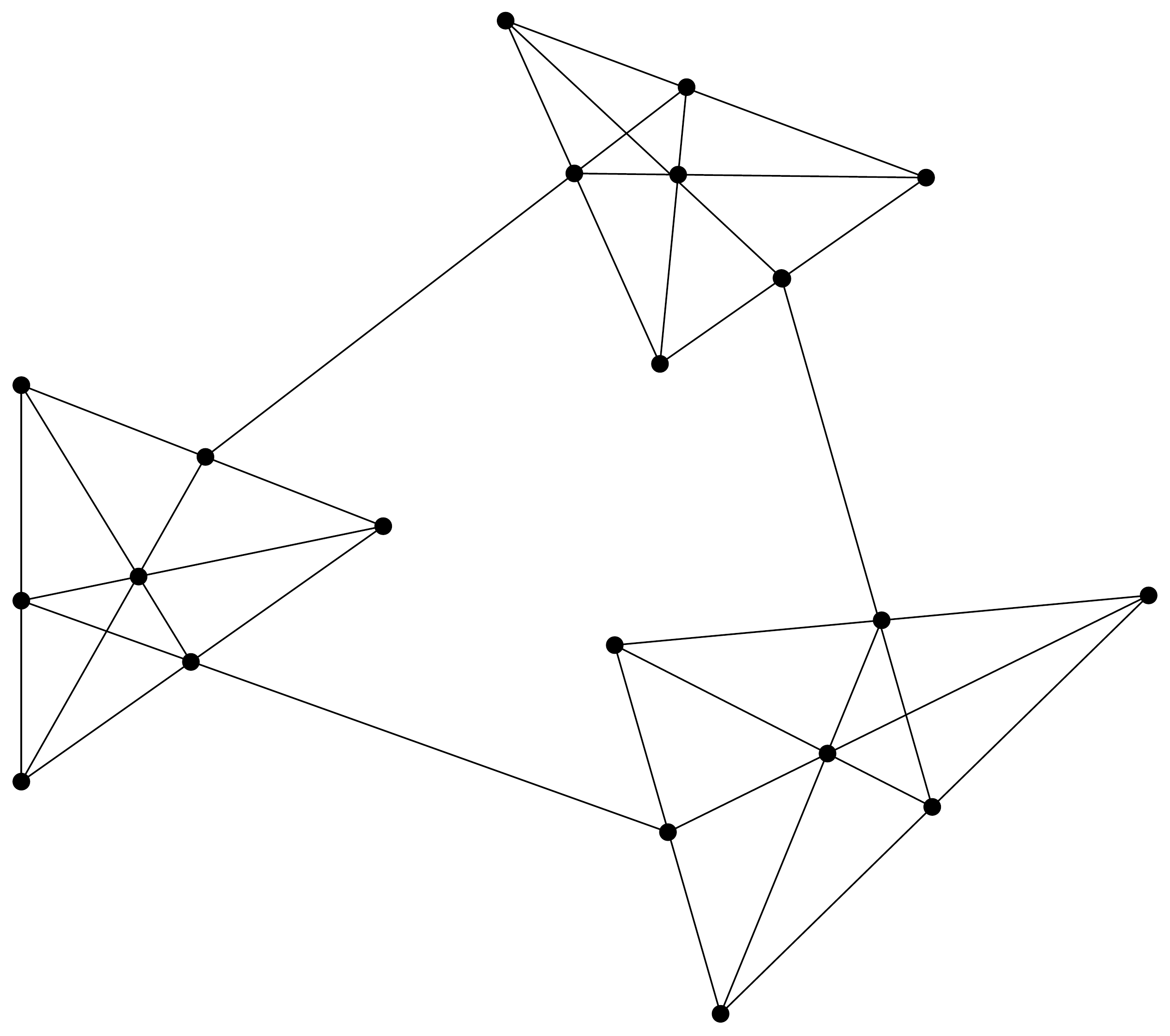}&
	\includegraphics[scale=0.18]{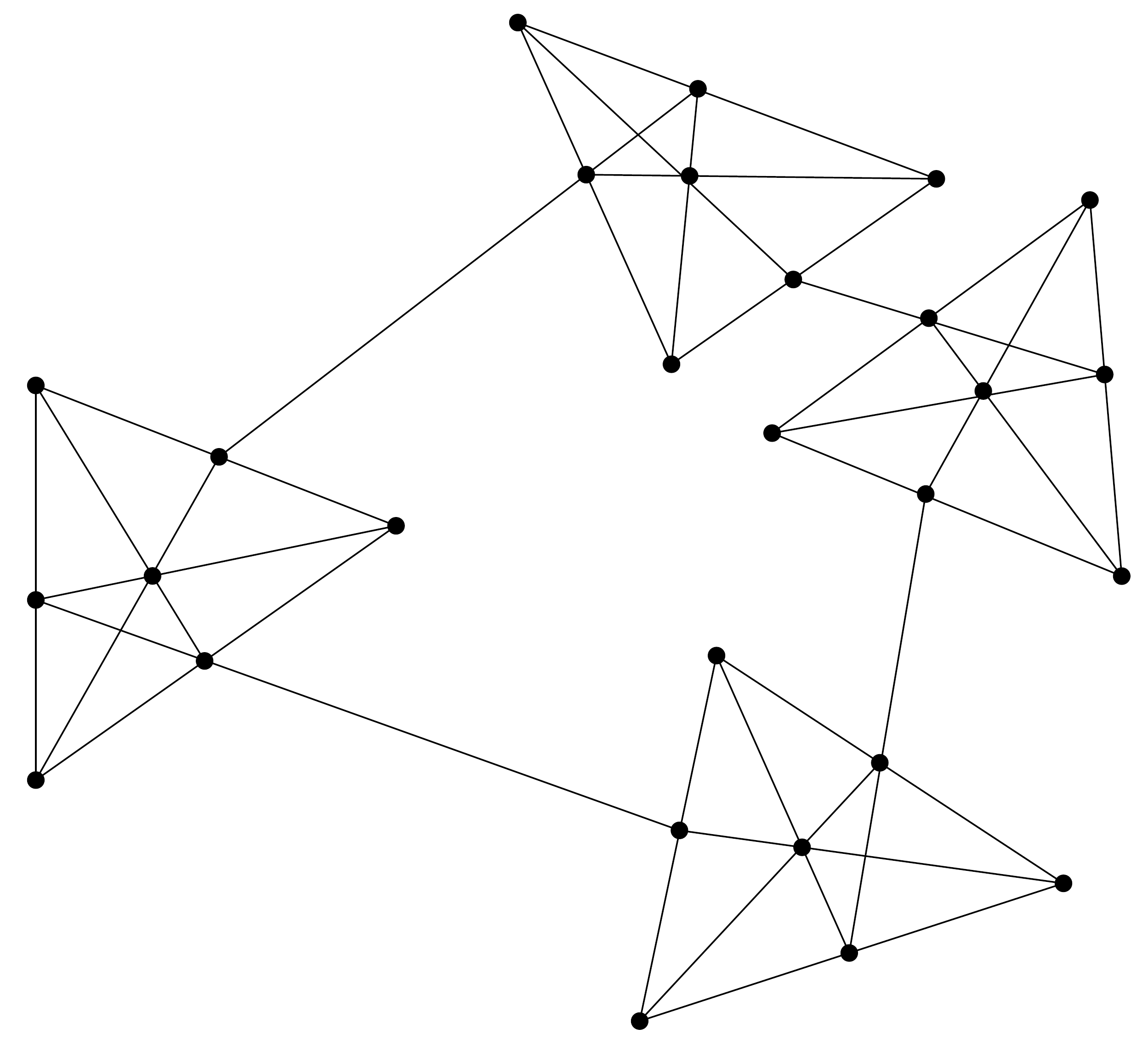}
    \end{tabular}
\end{center}
	\caption{The first three configurations in the infinite sequence of $7n_3$-configurations with the motions of the $n$-gon}
	\label{7n_3}
\end{figure}

The infinite family of flexible rod configurations constructed in the proof of Theorem~\ref{thm:eventually} can be modified slightly to give another infinite family of rod $v_3$-configurations. This family is illustrated in Figure \ref{fig:square_full}, which shows a $32_3$ point-line incidence geometry constructed from a square by adding a copy of the Fano plane with one line removed to each of its edges.

\begin{figure}
\begin{center}
	\includegraphics[scale=0.3]{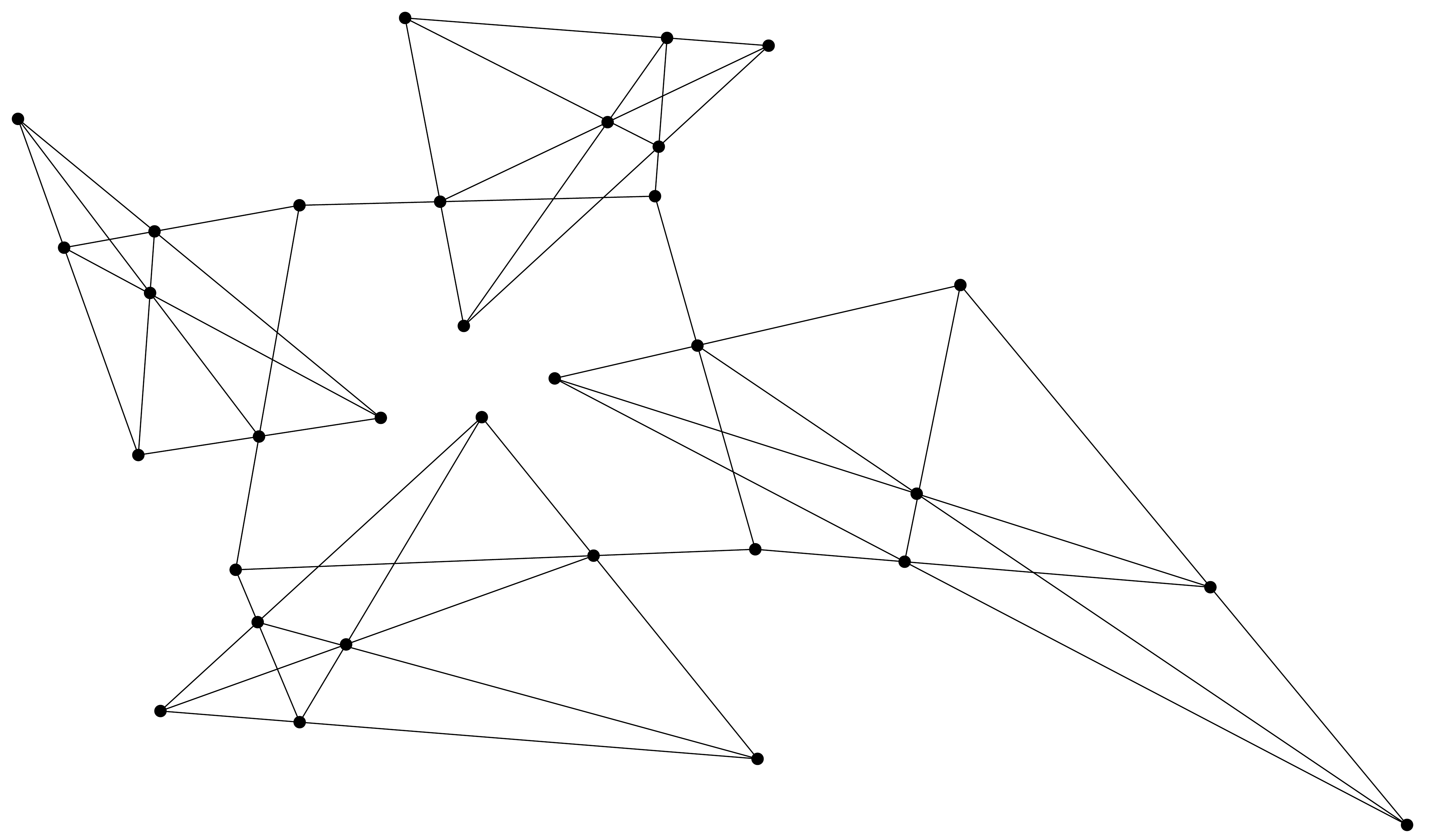}
\end{center}
	\caption{Another flexible configuration with the motions of a square}
	\label{fig:square_full}
\end{figure}

\subsection{Configurations that are flexible in special positions}

The point coordinates of the configurations in Figures \ref{7n_3} and \ref{fig:square_full} are not algebraically independent, so the configurations are not in generic position (according to the usual definition for graphs). The vertices of the square are in generic position, however, and more importantly, the motion will not disappear if the vertices of the square are placed elsewhere.

The $45_3$-configuration in Figure~\ref{fig:4}, on the contrary, features a motion that only exists because certain lines are parallel. It is constructed from six Fano planes, each with one line removed (the line that is realized as a circle in the common representation of the Fano plane in the real plane). These are joined together with two parallel grid structures that move independently. If the lines that are parallel in each of the two grid structures were not parallel, the configuration would not move. 

\begin{figure}
\begin{center}
  \includegraphics[scale=0.3]{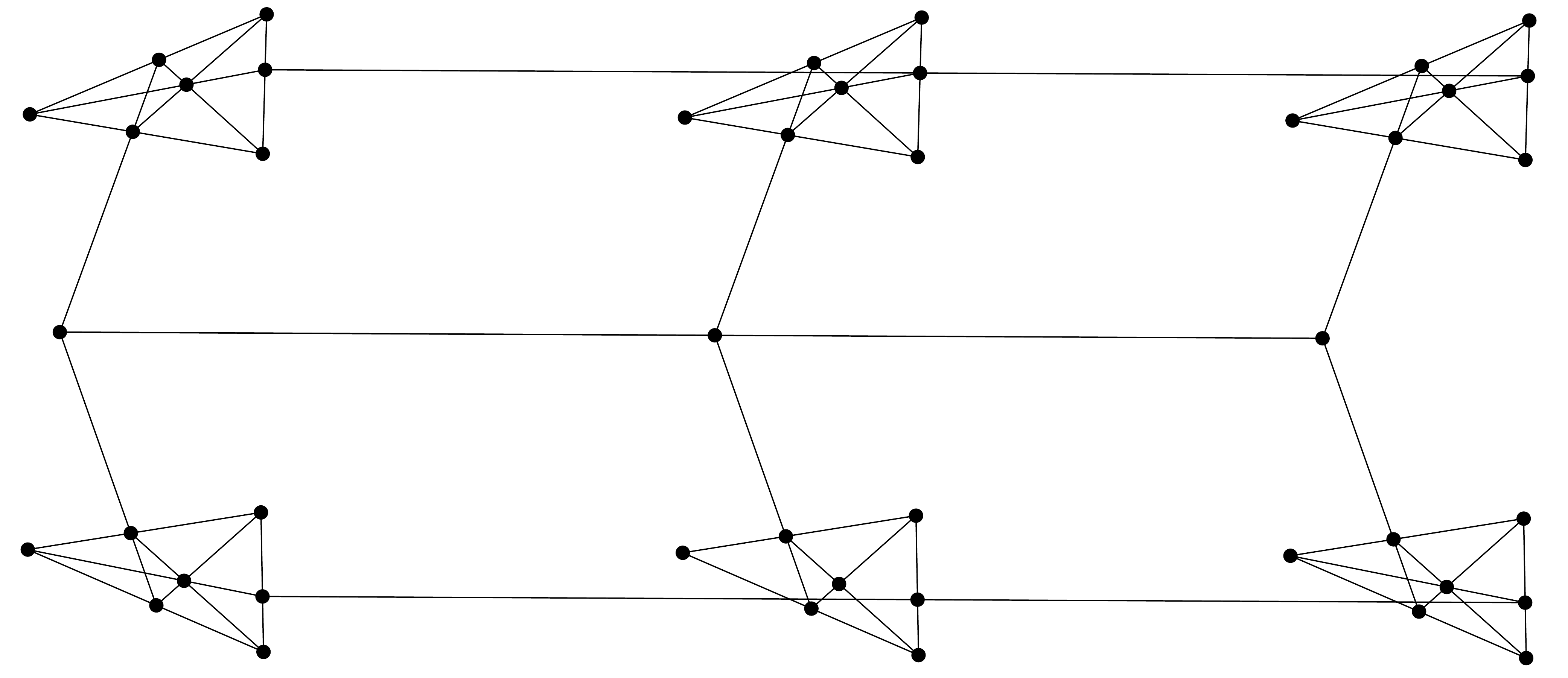}
  \end{center}
\caption{A flexible $45_3$-configuration with a motion in special position }
\label{fig:4}
\end{figure}

As another example, we consider the Gray configuration, which is a $27_3$-configuration that has a geometric realization with the points on a square lattice in dimension three. A planar point-line configuration is obtained by projecting it on a plane, see Figure~\ref{fig:5}. The configuration in dimension three is flexible, as is its planar projection. Again, the flexibility is due to the parallel positions of the lines; if some of the lines were not in parallel position, the configuration would be rigid. It is easy to see that the generalized Gray configuration, obtained through the same construction but starting with a square lattice in dimension $n$, has the same property. 

\begin{figure}
\begin{center}
\includegraphics[scale=0.3]{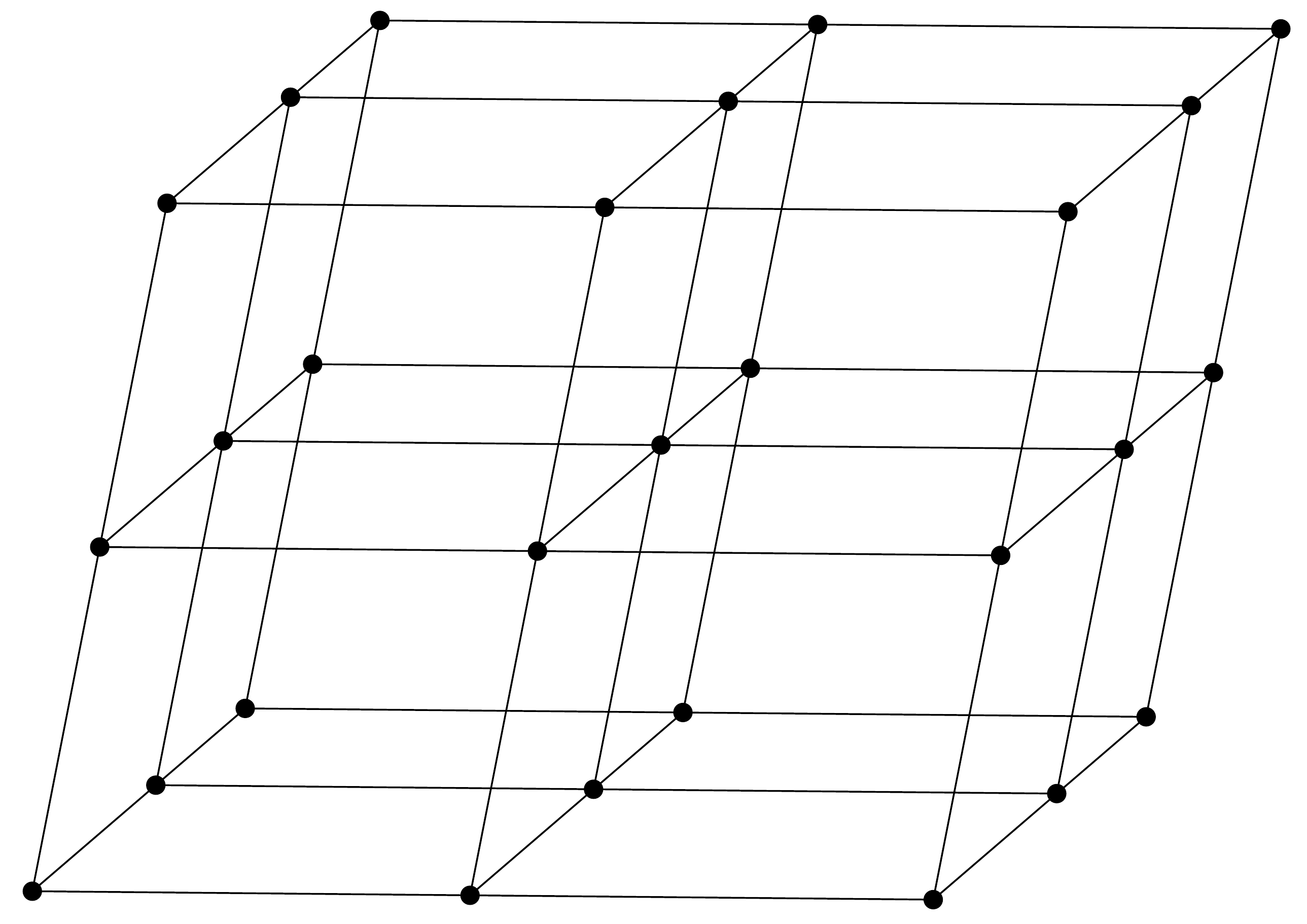}
\end{center}
	\caption{The Gray configuration}
	\label{fig:5}
\end{figure}

\subsection{Global flexibility of rod configurations}

Frameworks of graphs that admit a motion that preserves the lengths of edges, other than the Euclidean motions, are called globally flexible. By this definition, continuously flexible graphs are also globally flexible. There are also frameworks of graphs that are continuously and infinitesimally rigid, but still admit a non-continuous motion. A framework that is not globally flexible is globally rigid. The globally rigid graphs have been characterized by Connelly \cite{connelly} and Jackson and Jord\'an \cite{JacJorGlobal}. 

There is a natural extension of this notion to rod configuration. A rod configuration is said to be globally flexible if it admits a (possibly non-continuous) motion that preserves the pairwise distance between collinear points, other than the Euclidean motions. As with graphs, continuously flexible rod configurations are \emph{a fortiori} globally flexible. A rod configuration that is not globally flexible is said to be globally rigid.

Some of the continuously rigid rod configurations we have presented, such as the two joined Fano planes, and the triangle with a reduced Fano plane on each side (left and center in Figure \ref{7n_3}), are globally flexible. The triangle with a reduced Fano plane on each side admits a reflection of one of the reduced Fano planes in the line it is attached to. In contrast, for example, the Pappus configuration (Figure \ref{Pappus subconfiguration}) is not globally flexible.

Determining conditions on incidence geometries that have realizations as globally rigid rod configurations in the plane is a possible line for further research.

\section{Conclusions and open problems}

In this article, we have surveyed realizations of incidence geometries as rod configurations in the plane and their rigidity properties. 

Theorem \ref{Whiteley5.4} gives a combinatorial condition for an incidence geometry to have a realization as a minimally infinitesimally rigid rod configuration in the plane.  By adding certain rods to configurations that satisfy that condition, we derived further combinatorial conditions that imply that an incidence geometry has minimally infinitesimally rigid rod configurations in the plane. We have also provided examples that show that these conditions are not sufficient; in fact, minimally infinitesimally rigid rod configurations can be arbitrarily far from satisfying the  combinatorial conditions. 

Further, we provide examples of flexible geometric $v_3$-configurations. There are infinitely many such $v_3$-configurations, and if $v$ is sufficiently large, there is at least one flexible $v_3$-configuration. 

We conclude with some open problems.

\begin{enumerate}
  \item We know that any infinitesimally rigid graph $G=(V,E)$ has (at least one) spanning minimally infinitesimally rigid subgraph $G'=(V,E')$. All minimally infinitesimally rigid graphs on a given number of vertices has the same number of edges.

  Similarly, any infinitesimally rigid rod configuration has at least one spanning minimally infinitesimally rigid subconfiguration. As we have seen, not all minimally infinitesimally rigid subconfigurations on a given number of points have the same number of lines. It would be interesting to see if there are rigid rod configurations with more than one spanning minimally infinitesimally rigid subconfigurations, such that at least two of them have different number of lines.

  \item In Section \ref{Minimal rigidity} we saw that the minimally rigid rod configurations that satisfy either set of conditions in Proposition \ref{2|P|-2} can be constructed from a rod configuration satisfying Corollary \ref{Point-line count} by adding a line between two existing points and a point incident only to those lines. We have also seen that if we add more than two lines in this way, then we can no longer guarantee that the result is minimally infinitesimally rigid. However, it is possible to add more than two lines in this way to a given minimally infinitesimally rigid rod configuration and obtain a rod configuration that is minimally rigid.
It would be interesting to find an upper bound for the number of lines one can add in this way to a rod configuration satisfying Corollary \ref{Point-line count} before one can guarantee that the resulting rod configuration is not minimally infinitesimally rigid.

\item In Section \ref{Minimal rigidity}, we saw that there are infinitesimally rigid geometric $v_3$-configurations, and in Section \ref{Flexible} we saw that there are infinitesimally flexible $v_3$-configurations. However, we still do not have any examples of minimally rigid geometric $v_3$-configurations.
\end{enumerate}

\section{Acknowledgements}

The work has been supported by the Knut and Alice Wallenberg Foundation Grant 2020.0001 and 2020.0007.

We want to thank Brigitte Servatius and Walter Whiteley for useful discussions.

\end{document}